\newcommand{\cvx}[1]{\mathrm{conv}\left( #1 \right)}
\newcommand{\tr}{\mathsf{T}}
\newcommand{\QED}{\hfill \ensuremath{\Box}}
\newcommand{\order}{\mathbf{O}}
\newcommand{\epsL}{\epsilon_{\mathrm{L}}}
\newtheorem{assumption}{Assumption}
\definecolor{boris}{rgb}{0.0, 0.4, 0.1}
\begin{document}

\title{Partially Distributed Outer Approximation
}

\author{ Alexander Murray \and Timm Faulwasser \and Veit Hagenmeyer \and Mario E.~Villanueva \and Boris Houska
}


\institute{A.~Murray, T.~Faulwasser, V.~Hagenmeyer \at
              Institute for Automation and Applied Computer Science, Karlsruhe Institute of Technology, Germany
              \email{\{alexander.murray, veit.hagenmeyer\}@kit.edu, timm.faulwasser@ieee.org}  
           \and
           M.E.~Villanueva and B.~Houska \at
           School of Information Science and Technology, ShanghaiTech University, China
           \email{\{meduardov,borish\}@shanghaitech.edu.cn}
}

\date{Received: date / Accepted: date}

\maketitle

\begin{abstract}
This paper presents a novel partially distributed outer approximation algorithm, named PaDOA, for solving a class of structured mixed integer convex programming (MICP) problems to global optimality. The proposed scheme uses an iterative outer approximation method for coupled mixed integer optimization problems with separable convex objective functions, affine coupling constraints, and compact domain. PaDOA proceeds by alternating between solving large-scale structured mixed-integer linear programming problems and partially decoupled mixed-integer nonlinear programming subproblems that comprise much fewer integer variables. We establish conditions under which PaDOA converges to global minimizers after a finite number of iterations and verify these properties with an application to thermostatically controlled loads.
\keywords{Mixed Integer Programming \and Distributed Optimization \and Outer Approximation \and Global Optimization}
\end{abstract}

\section{Introduction}
\label{sec::intro}

A mixed integer convex program (MICP) is an optimization problem with convex objective and constraint functions, where the only non-convex constraint is that a subset of the optimization variables need to be integer-valued~\cite{Bonami2012,Lubin2017}. MICPs arise in a plethora of application areas ranging from AC transmission expansion planninh and robust power flow problems~\cite{Alguacil2003,Kocuk2017}, via thermal unit design and control~\cite{Carrion2006}, a variety of scheduling and layout design problems~\cite{Sawaya2006}, design of multi-product batch plants~\cite{Ravemark1998}, to obstacle avoidance and robotic motion planning problems~\cite{Kuindersma2016}.

Although MICPs are NP-hard in general, there exist a variety of algorithms for solving MICPs to global optimality~\cite{Bonami2012}. 
State-of-the-art MICP solvers are based on tailored methods that exploit the fact that the integrality constraints are discrete while all other constraints are convex. Early attempts to develop tailored branch \& bound methods for MICP have been proposed in~\cite{Gupta1985}, mostly focussing on computational experiments and heuristics for selecting the branching variables and nodes. Improved versions of these early branch \& bound methods for MICP can be found in~\cite{Borchers1994,Leyffer2001}. Other early methods for solving MICP include generalized Benders decomposition methods~\cite{Benders_1962,Geoffrion1972}, which are, however, less frequently used in state-of-the-art MICP solvers.\footnote{For more details see~\cite{Bonami2012}.}

Modern MICP implementations are often, in one or the other way, based on or related to outer approximation (OA), which goes back to Duran and Grossmann~\cite{Duran_1986}. In contrast to branch \& bound, OA alternates between solving nonlinear programs (NLP) with fixed integer values as well as mixed integer linear programs (MILP), which are constructed by linearizing the objective and constraint functions at the solutions of the NLP and which are used to update the integer variables.
A notable extension of OA has been developed by Fletcher and Leyffer~\cite{Fletcher_1994}, who suggest to include curvature information in the relaxed integer program leading to a quadratic outer approximation method. Moreover, Kesavan and co-workers~\cite{Kesavan_2004} have studied variants of OA for solving non-convex mixed integer problems.
Another class of MICP methods are based on (extended) cutting plane methods~\cite{Westerlund1995} or combination of OA and branch-and-cut~\cite{Quesada1992}; see also~\cite{Tawarmalani2005} for a general overview of polyhedral branch-and-cut methods. In recent years, there has been considerable progress in lift-and-project methods for MICP. An excellent overview and discussion of the state-of-the-art of such lift-and-project methods can be found in a recent article by Kilin\c{c}, Linderoth, and Luedtke~\cite{Kilinc2017}.

Another recent trend in MICP solver development is the exploitation of separable structures by so-called extended formulations~\cite{Hijazi2014}. Here, the main idea is to introduce auxiliary variables in order to bound decoupled summands in additive expressions separately~\cite{Vielma2016}, which can lead to tighter polyhedral outer approximations. Such extended formulations have not only found their way into OA methods, as discussed in~\cite{Hijazi2014}, but they can also be used to increase the performance of lift-and-project methods for MICP~\cite{Kilinc2017}. However, extended formulations exploit the separability for the construction of tighter outer approximations only, but neither existing OA methods nor state-of-the-art lift-and-project methods ever attempt to break a large-scale MICP into decoupled MICPs with fewer integer variables. This is in contrast to distributed continuous convex optimization methods, such as dual decomposition~\cite{Everett1963,Necoara2008}, alternating direction method of multipliers (ADMM)~\cite{Boyd_2011,Eckstein1992,Gabay1976}, or augmented lagrangian based alternating direction inexact newton (ALADIN) methods~\cite{Houska_2016}, which can all be used to solve large-scale convex optimization problems to global optimality by alternating between solving small-scale convex optimization problems and sparse linear algebra operations. These methods typically require communication of the solutions of the decoupled problems between neighbors or to a central coordinator~\cite{Boyd_2011}. Although some researchers,~\cite{Takapoui_2016,Murray_2018}, have attempted to apply these distributed local optimization methods in a heuristic manner, these methods cannot find global minimizers of non-convex problems reliably. This is due to the fact that ADMM, ALADIN, or similar distributed convex optimization method typically rely on strong duality results for augmented Lagrangians~\cite{Shapiro2004,Shapiro2009}, which fail to hold in the presence of integrality constraints.

After reviewing Hijazi's extended formulations and related existing outer approximation methods in Section~\ref{sec::OA}, the main contribution of this paper is presented in Section~\ref{sec::PADOA}, which introduces
a partially distributed outer approximation (PaDOA) method for MICPs with separable objective functions. In contrast to existing algorithm for structured MICP, PaDOA alternates between solving MICPs with fewer integer variables and large-scale MILPs for which efficient algorithms exist. Section~\ref{sec::convergenceAnalysis} discusses the global convergence properties of PaDOA, as summarized in Theorem~\ref{thm::PaDOA}. In this context, we additionally establish the fact that global optimality of a given feasible point of an MICP with $N$ separable objectives and $Nn$ optimization variables can be computationally verified by solving $N$ partially-decoupled MICPs, each comprising at most $n$ local integer variables, and one MILP with $N n$ integer variables. This result is summarized in Theorem~\ref{thm::termination}, which analyzes one-step convergence conditions for PaDOA. In the sense that both MICPs as well as MILPs are NP hard in general~\cite{Garey1979,Murty1987}, this result is not in conflict with existing complexity results for mixed integer optimization problems. However, there are solvers such as \texttt{CPLEX}~\cite{CPLEX2009}, \texttt{Gurobi}~\cite{GUROBI2009}, and many others~\cite{Conforti2009}, which can solve practical MILPs within reasonable computational run-times. Thus, the fact that one can reduce the task of verifying global optimality of a feasible point of a separable MICP with coupled affine constraints to the task of solving one MILP of a comparable size and several smaller subproblems, is---at least from a computational perspective---an important contribution.
Last but not least, Section~\ref{sec:sim} illustrates the practical performance of PaDOA by applying the algorithm to MICP benchmark case studies. Section~\ref{sec::conclusions} concludes the paper.

\subsection{Problem formulation}
\label{sec:prob}
The present paper is concerned with mixed integer optimization problems of the form
\begin{eqnarray}
\label{eq::minlp}
\begin{array}{rccl}
V^\star &=& \underset{x \in X, z \in Z}{\min}& f(x,z) \\[0.3cm]
& & \text{s.t.}& A x = b
\end{array}
\end{eqnarray}
with separable objective function
$f(x,z) = \sum_{i=1}^N f_i(x_i,z_i)$
and separable constraint sets
\begin{eqnarray}
\begin{array}{rclcrcl}
X &=& X_1 \times \ldots \times X_N \quad &\text{with}& \quad X_1, X_2, \ldots, X_N &\subseteq& \mathbb R^{n} \\[0.16cm]
\text{and} \quad Z &=& Z_1 \times \ldots \times Z_N \quad &\text{with}& \quad Z_1, Z_2, \ldots, Z_N &\subseteq& \mathbb Z^{m} \; .
\end{array}
\end{eqnarray}
The coupling matrix $A$ and the vector $b$ are assumed to be given. In this context, the following blanket assumption is used.
\begin{assumption}
\label{ass::convexity}
The sets $X_1, X_2, \ldots, X_N \subseteq \mathbb R^{n}$ are non-empty convex polytopes, the sets $Z_1, Z_2, \ldots, Z_N \subseteq \mathbb Z^{m}$ are non-empty and compact, and the functions $f_i$ are convex on the convex hull of $X_i \times Z_i$.
\end{assumption}
The goal of this paper is to develop an efficient algorithm that finds $\varepsilon$-suboptimal points of~\eqref{eq::minlp}, which are defined as follows:
\begin{definition}
A feasible point $(x^\star,z^\star) \in X \times Z$ with $A x^\star = b$ is said to be an $\varepsilon$-suboptimal point of \eqref{eq::minlp}, with $\varepsilon > 0$, if
\begin{align*}
f(x^\star,z^\star) \leq f(x,z) + \varepsilon \,.
\end{align*}
for all $(x,z) \in X \times Z$ with $A x = b$.
\end{definition}

\begin{remark}
Instead of~\eqref{eq::minlp}, one could also consider more general optimization problems of the form
\begin{eqnarray}
\label{eq::minlp2}
\begin{array}{cl}
\underset{x \in X, z \in Z}{\min}& f(x,z) \\[0.3cm]
\text{s.t.}& A x + B z = b \; .
\end{array}
\end{eqnarray}
However, under mild regularity assumptions~\cite{Nocedal_2006}, this problem is equivalent to 
\begin{eqnarray}
\begin{array}{cl}
\underset{x \in X, y \in \cvx{Z}, z \in Z}{\min}& \sum_{i=1}^N \{ f_i(x_i,z_i) + \bar \lambda_i \| y_i - z_i  \|_1 \} \\[0.3cm]
\text{s.t.}& A x + B y = b \; .
\end{array}
\end{eqnarray}
with real-valued auxiliary variables $y$ and $L_1$-penalty parameters $\overline \lambda_i \gg 0$.
Here, $\cvx{Z}$ denotes the convex hull of $Z$.
Thus, for all theoretical purposes, it is sufficient to analyze problems of the form~\eqref{eq::minlp}, where only the real-valued variables are coupled.
\end{remark}

\begin{remark}
Modern MICP formulations, algorithms, and software can deal with rather general convex conic constraints~\cite{Lubin2017}. Such constraints are left out for simplicity of presentation. Nevertheless all results in this paper can be easily extended for general conic constraints, as long as they are separable. From a purely theoretical perspective, one might argue that this can always be achieved by adding suitable convex penalty functions to the functions $f_i$, because this paper makes no assumptions on the differentiability properties of $f$. However, more tailored, practical algorithms that could exploit the structures of particular conic constraints are beyond the scope of this paper. 
\end{remark}

\subsection{Notation}
We use the notation
\[
\partial_x g(x) = \left\{ a \in \mathbb R^n \mid \forall y \in \mathbb R^{n}, \; g(y) \geq g(x) + a^\tr ( y - x )  \right\}
\]
to denote the set of subgradients of a convex function $g: \mathbb R^{n} \to \mathbb R$ with respect to the variable $x$.

\section{Outer approximation}
\label{sec::OA}

\subsection{Polyhedral relaxations}
In order to construct polyhedral outer approximations of the epigraph of the objective function $f$ of~\eqref{eq::minlp}, we consider the auxiliary optimization problem
\begin{eqnarray}
\label{eq::AuxOpt}
f^\star(z) = \min_{x,y} \; f(x,z) \quad \mathrm{s.t.} \quad \left\{
\begin{array}{ll}
x = y & \mid \; \lambda \\[0.08cm]
A y = b \\[0.08cm]
y \in X \; .
\end{array} \right.
\end{eqnarray}
for a fixed integer parameter $z \in Z$. Here, $x$ and $y$ are real valued primal optimization variables and the notation ``$x = y \; \mid \lambda$'' is used to say that $\lambda$ denotes the dual variable that is associated with the constraint $x=y$.
\begin{proposition}
\label{prop::AuxOptDual}
If Assumption~\ref{ass::convexity} is satisfied, strong duality holds for~\eqref{eq::AuxOpt}, i.e., we have
\begin{eqnarray}
\label{eq::AuxOptDual}
f^\star(z) = \max_{\lambda} \; \min_{x,y} \; f(x,z) + \lambda^\tr (y-x) \quad \mathrm{s.t.} \quad \left\{
\begin{array}{l}
A y = b \\
\hspace{3mm} y  \in X
\end{array}
\right.
\end{eqnarray}
for all $z \in Z$.
\end{proposition}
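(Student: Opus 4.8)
The plan is to read \eqref{eq::AuxOpt}, for a fixed $z \in Z$, as a convex program in the variables $(x,y)$ in which the sole constraint that is dualized, $x = y$, is affine, and to invoke the classical strong-duality theorem for convex problems with affine coupling and polyhedral side constraints -- a result that needs no Slater-type qualification, only primal feasibility together with finiteness of the optimal value. So the first step is to dispose of the trivial case: if $\{\, y \in X : Ay = b \,\} = \emptyset$, then \eqref{eq::AuxOpt} is infeasible, both sides of \eqref{eq::AuxOptDual} equal $+\infty$, and nothing is to be shown; hence assume this set is non-empty. By Assumption~\ref{ass::convexity} each $X_i$ is a compact polytope and each $f_i$ is convex on $\cvx{X_i \times Z_i}$, so $f(\cdot,z)$ is a proper convex function whose effective domain may be taken to contain the compact set $X$; consequently $f^\star(z)$ is finite.

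Next I would form the partial Lagrangian $L(x,y,\lambda) = f(x,z) + \lambda^\tr(y - x)$ and the dual function $d(\lambda) = \min\{\, L(x,y,\lambda) : Ay = b,\ y \in X \,\}$; weak duality $d(\lambda) \leq f^\star(z)$ is immediate. To obtain equality I would pass to the primal value function
\begin{equation*}
p(u) \;=\; \min\bigl\{\, f(x,z) \ :\ x - y = u,\ Ay = b,\ y \in X \,\bigr\},
\end{equation*}
which is convex with $p(0) = f^\star(z)$ and $p^{**}(0) = \sup_\lambda d(\lambda)$, so that \eqref{eq::AuxOptDual} is precisely the statement that $p$ is closed at the origin (no duality gap) and that the dual supremum is attained.

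The third step, where compactness enters, is to check closedness of $p$. Observe that $p$ is the image of the closed convex function $(x,y) \mapsto f(x,z) + \delta_{\{y \in X,\ Ay = b\}}(y)$ under the linear map $(x,y) \mapsto x - y$; on the kernel of that map, namely $\{x = y\}$, this function has effective domain contained in the compact set $X \cap \{x : Ax = b\}$ and hence admits no nonzero direction of recession there. The standard partial-minimization theorem -- the image of a closed proper convex function under a linear map is closed whenever it has no recession directions in the kernel, e.g.\ Rockafellar, \emph{Convex Analysis} -- then yields that $p$ is closed, so $p(0) = p^{**}(0)$; together with the feasibility and finiteness established in Step~1 this gives \eqref{eq::AuxOptDual}.

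I expect the one genuinely delicate point to be the interaction between the effective domain of $f(\cdot,z)$ and the variable $x$, which ranges unconstrained in $L(x,y,\lambda)$: one must ensure that the inner minimization over $x$ is not spuriously $-\infty$ for the maximizing multiplier, and that the dual maximum is really attained rather than merely approached (the $\max$ versus $\sup$ distinction). Compactness of $X$ -- a polytope by Assumption~\ref{ass::convexity} -- together with the convexity and finiteness of each $f_i$ on $\cvx{X_i \times Z_i}$ is exactly what controls this; in writing the proof out I would fix the domain convention so that $\mathrm{dom}\, f(\cdot,z) \supseteq X$, turning \eqref{eq::AuxOpt} into a bona fide convex program with polyhedral side constraints for which the cited strong-duality results apply verbatim.
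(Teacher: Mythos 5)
Your proposal reaches the stated conclusion by a genuinely different route than the paper. The paper's proof is two lines: after disposing of the infeasible case exactly as you do, it observes that \eqref{eq::AuxOpt} is a convex program whose feasible set is nonempty and compact and whose constraints are all linear (since $X$ is a polytope), and then cites the classical strong-duality results of Rockafellar and Bertsekas. You instead reconstruct that classical result from scratch: partial Lagrangian, primal value function $p(u)$, the identity $\sup_\lambda d(\lambda) = p^{**}(0)$, and closedness of $p$ via the image/partial-minimization theorem, with the recession condition verified because the only direction $(d,d)$ in the kernel of $(x,y)\mapsto x-y$ that is a recession direction of the constraint set has $d$ in the recession cone of the compact set $\{y\in X: Ay=b\}$, hence $d=0$. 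This buys a self-contained argument that makes explicit where each hypothesis (compactness of $X$, affinity of the dualized constraint, convexity and finiteness of $f(\cdot,z)$ on $X$) enters; the paper buys brevity by outsourcing exactly these facts to the textbooks it cites.

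There is, however, one genuine gap, and it is precisely the point you flag but do not close: attainment of the dual maximum. Closedness of $p$ at the origin gives $p(0)=p^{**}(0)$, i.e.\ $\sup_\lambda d(\lambda)=f^\star(z)$, but the proposition asserts a $\max$, and the paper later uses $\lambda^\star(z)$ as an existing dual solution (a subgradient of $f$). Attainment is equivalent to $\partial p(0)\neq\varnothing$, which does not follow from closedness of $p$, and compactness of $X$ alone does not control it: with the convention $\mathrm{dom}\, f(\cdot,z)=X$ and an objective with unbounded steepness at the boundary of $X$ (e.g.\ a summand $-\sqrt{x_1}$ on $[0,1]$) while the affine constraint $Ax=b$ pins the feasible points to that boundary, the dual supremum is approached but never attained. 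What closes the argument is a relative-interior qualification of the type ``some feasible $x$ lies in $\mathrm{ri}(\mathrm{dom}\, f(\cdot,z))$'' (equivalently $0\in\mathrm{ri}(\mathrm{dom}\, p)$), which holds under the natural reading that each $f_i$ is real-valued (or finite on a neighborhood of $\cvx{X_i \times Z_i}$); this is also the implicit regularity under which the theorems the paper cites deliver $\max$ rather than $\sup$. So either strengthen your domain convention accordingly and add the one-line subdifferential argument $\partial p(0)\neq\varnothing \Rightarrow$ dual attainment, or state explicitly that your Step~3 proves the no-gap statement only; as written, ``compactness \ldots is exactly what controls this'' is an assertion, not a proof.
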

\begin{proof}
See Appendix~\ref{app::AuxOptDual}.
\end{proof}
Let $x^\star(z),y^\star(z),\lambda^\star(z)$ denote any primal-dual solution of~\eqref{eq::AuxOpt} in dependence on $z$. By writing out the stationarity condition of~\eqref{eq::AuxOpt} with respect to $x$, we find that
\[
\lambda^\star(z) \in \partial_x f( x^\star(z), z ) \; ,
\]
i.e., $\lambda^\star(z)$ must be a subgradient of $f$ at the optimal solution of~\eqref{eq::AuxOpt}.
In order to understand the developments given below it is helpful to keep in mind that the reverse statement is not correct, i.e., a subgradient of $f$ at $(x^\star(z), z)$ is not necessarily a dual solution of~\eqref{eq::AuxOpt}. 


In contrast to the particular choice of the subgradient $\lambda^\star$ of $f$ with respect to $x$, the construction of a subgradient of $f$ with respect to $z$ is less critical for the construction of outer approximation methods. In the following, we assume that a function
\[
\mu^\star(z) \in \partial_z f( x^\star(z), z ) \; ,
\]
is given, which returns a subgradient of $f$ with respect to $z$ at the optimal solution of~\eqref{eq::AuxOpt}.
Because $f(x,z) = \sum_{i=1}^N f(x_i,z_i)$ is separable, the $i$-th block components, $\lambda_i^\star(z)$ and $\mu_i^\star(z)$, of the subgradients of $f$ are subgradients of $f_i$. Thus, the inequality
\[
f_i(x_i, z_i) \geq f_i^\star( \hat z ) + \left[ \lambda_i^\star(\hat z) \right]^\tr ( x_i  - x_i^\star(\hat z) ) + \left[ \mu_i^\star(\hat z) \right]^\tr ( z_i  - \hat z_i )
\]
holds for all $x_i \in X_i$, and $z_i \in Z_i$ and all $\hat z \in Z$. Here, the shorthand
$$f_i^\star( \hat z ) = f_i(x_i^\star( \hat z), \hat z_i)$$
is used. In this context, it is important to notice that the function $f_i^\star(\hat z)$ depends on the whole vector $\hat z$, not only on its $i$-th component, $\hat z_i$, because the equality constraints in~\eqref{eq::AuxOptDual} introduce a non-trivial coupling.
More generally, if $\Xi \subseteq Z_i$ denotes finite set of points in $Z$, we associate with $\Xi$ a set of hyperplane coefficients
\begin{eqnarray}
\mathcal H_i( \Xi ) = \left\{
(\alpha,\beta,\gamma) \left|
\begin{array}{l}
z \in \Xi \\[0.1cm]
\alpha = \lambda_i^\star(z) \\[0.1cm]
\beta = \mu_i^\star(z) \\[0.1cm]
\gamma = f_i^\star(z) - \alpha^\tr x_i^\star(z) - \beta^\tr z_i
\end{array}
\right.
\right\} \; .
\end{eqnarray}
Notice that this set of hyperplane coefficients defines a polyhedral outer approximation of the epigraph of $f_i$. Thus, these coefficients can be used to construct a piecewise affine lower bound on $f_i$, which is for all $(x_i,z_i) \in X_i \times Z_i$ given by
\begin{eqnarray}
\label{eq::separableLowerBounds}
 \Phi_{i}(x_i,z_i,\Xi) = \max_{(\alpha,\beta,\gamma) \in H_{i}(\Xi)} \left\{ \alpha^\tr x_i + \beta^\tr z_i + \gamma \right\} \; .
\end{eqnarray}
Finally, we can construct the function $\Phi(x,z,\Xi) = \sum_{i=1}^N \Phi_{i}(x_i,z_i,\Xi)$. This function is---by construction---a piecewise affine lower bound on $f$,
\begin{eqnarray}
\label{eq::LowerBoundF}
\forall (x,z) \in X \times Z, \qquad \Phi(x,z,\Xi) \leq f(x,z) \; .
\end{eqnarray}
Next, our particular choice of the subgradient $\lambda^\star(z)$ of $f$ as the dual solution of~\eqref{eq::AuxOpt} enables us to establish the following tightness property of the affine lower bound $\Phi$.
\begin{lemma}
\label{lemma::lowerBound}
Let $\Xi \subseteq Z$ be any finite set of points. If Assumption~\ref{ass::convexity} holds, then the equation
\begin{eqnarray}
\label{eq::TightLowerBound}
f^\star(z) = \min_{x \in X} \; \Phi(x,z,\Xi) \quad \mathrm{s.t.} \quad A x = b \; .
\end{eqnarray}
holds for all $z \in \Xi$.
\end{lemma}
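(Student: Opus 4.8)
The plan is to show that for any fixed $z \in \Xi$, the piecewise affine lower bound $\Phi(\cdot,z,\Xi)$ agrees with $f(\cdot,z)$ exactly at the point $x^\star(z)$, and then to combine this with the general lower bound property~\eqref{eq::LowerBoundF} and the strong duality characterization of Proposition~\ref{prop::AuxOptDual}. Concretely, fix $z \in \Xi$. First I would note that, by~\eqref{eq::LowerBoundF}, $\Phi(x,z,\Xi) \leq f(x,z)$ for all feasible $x \in X$ with $Ax = b$; taking the minimum over this feasible set immediately gives
\[
\min_{x \in X,\ Ax=b} \Phi(x,z,\Xi) \;\leq\; \min_{x \in X,\ Ax=b} f(x,z) \;=\; f^\star(z),
\]
where the last equality is just the definition of $f^\star(z)$ in~\eqref{eq::AuxOpt} (eliminating the slack variable $y$). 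So the ``$\geq$'' direction of~\eqref{eq::TightLowerBound} is the substantive one.

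For the reverse inequality, the key point is that $z \in \Xi$, so the hyperplane coefficients $(\lambda_i^\star(z), \mu_i^\star(z), \gamma_i)$ generated at $z$ itself belong to $\mathcal H_i(\Xi)$ for each $i$. Hence, for the specific choice $\alpha = \lambda_i^\star(z)$, $\beta = \mu_i^\star(z)$ in~\eqref{eq::separableLowerBounds}, we get the pointwise bound
\[
\Phi_i(x_i,z_i,\Xi) \;\geq\; \bigl[\lambda_i^\star(z)\bigr]^\tr x_i + \bigl[\mu_i^\star(z)\bigr]^\tr z_i + \gamma_i
\]
for all $x_i \in X_i$, and summing over $i$ and substituting the definition of $\gamma_i = f_i^\star(z) - [\lambda_i^\star(z)]^\tr x_i^\star(z) - [\mu_i^\star(z)]^\tr z_i$ yields
\[
\Phi(x,z,\Xi) \;\geq\; f(x^\star(z),z) \;+\; \bigl[\lambda^\star(z)\bigr]^\tr \bigl(x - x^\star(z)\bigr),
\]
where the $\mu$-terms cancel because the evaluation is at the fixed $z$. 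Now minimizing the right-hand side over $x \in X$ with $Ax = b$: since $\lambda^\star(z)$ is precisely the dual solution of~\eqref{eq::AuxOpt}, the linear functional $x \mapsto f(x^\star(z),z) + [\lambda^\star(z)]^\tr(x - x^\star(z))$ is exactly the inner Lagrangian objective appearing in~\eqref{eq::AuxOptDual} (after eliminating $y = x$ on the constraint set); by Proposition~\ref{prop::AuxOptDual} its minimum over the feasible set equals $f^\star(z)$. Therefore $\min_{x \in X,\ Ax=b}\Phi(x,z,\Xi) \geq f^\star(z)$, which together with the first inequality establishes~\eqref{eq::TightLowerBound}.

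The main obstacle is the careful handling of the dual variable $\lambda^\star(z)$: one must use the fact that $\Phi$ was built with the \emph{specific} subgradient coming from the dual solution of~\eqref{eq::AuxOpt}, not an arbitrary subgradient of $f$ — the paper itself flags this subtlety right before the lemma. The argument that the minimized affine lower bound reproduces $f^\star(z)$ rests entirely on identifying that affine function with the inner minimization in the strong-duality formula~\eqref{eq::AuxOptDual} evaluated at the optimal $\lambda$; any other subgradient would only give $\min_x \Phi \leq f^\star(z)$ but not the matching lower bound. A secondary bookkeeping point is the elimination of the auxiliary variable $y$ (using $x = y$) to pass between the $(x,y)$-formulation~\eqref{eq::AuxOpt} and the $x$-only problems in~\eqref{eq::TightLowerBound} and~\eqref{eq::AuxOptDual}, but this is routine.
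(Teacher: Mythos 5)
Your proof is correct and takes essentially the same route as the paper: the hyperplane generated at $z$ itself (which is in $\mathcal H_i(\Xi)$ because $z\in\Xi$) gives $\Phi(x,z,\Xi)\geq f^\star(z)+[\lambda^\star(z)]^\tr(x-x^\star(z))$, whose minimum over $\{x\in X:\,Ax=b\}$ is identified with the inner Lagrangian minimum of \eqref{eq::AuxOptDual} at the optimal multiplier and hence with $f^\star(z)$ via Proposition~\ref{prop::AuxOptDual}, while \eqref{eq::LowerBoundF} supplies the reverse inequality. The only cosmetic differences are that the paper first dispatches the degenerate case where $\{x\in X:\,Ax=b\}$ is empty (both sides being $+\infty$), and that your parenthetical ``eliminating $y=x$'' is more precisely the step of minimizing out the free variable $x$ in the Lagrangian using $\lambda^\star(z)\in\partial_x f(x^\star(z),z)$, leaving the affine minimization over the constrained variable.
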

\begin{proof}
See Appendix~\ref{app::lowerBound}.
\end{proof}

\begin{remark}
\label{rem::Relaxation}
If the set $\Xi$ consists of $m$ points, the computational cost for constructing the lower bound~\eqref{eq::LowerBoundF} of $f$ has order $\order(m N)$. Notice that if we would have ignored the separable structure of $f$, the computational cost of computing the same lower bounding function would have been of order $\order(m^N)$. Thus, the construction of~\eqref{eq::LowerBoundF} as a sum of the lower bounds of the separable objective function is much cheaper than a direct construction of lower bounds of $f$. This reduction in complexity has for the first time been observed and exploited by Hijazi and co-workers~\cite{Hijazi2014}. By now, the expoitation of separability via extended formulations can be considered as a standard that has been adopted in many modern MICP algorithms and software tools~\cite{Kilinc2017,Lubin2017}.
\end{remark}

\subsection{Outer approximation algorithm}
\label{sec:OA_alg}
Algorithm~1 outlines the main steps of the outer approximation algorithm. Notice that this algorithm basically coincides with the original outer approximation algorithm that has been proposed in~\cite{Duran_1986}. The only notable differences of Algorithm~1 compared to traditional OA are that the MILP in Step~3 uses the extended formulation based outer approximation variant from~\cite{Hijazi2014}. Moreover, because we do not assume that $f$ is differentiable, we have to use the particular choice, $\lambda^\star(z)$, of the subgradient, which is found as the dual solution\footnote{The idea to use dual solutions as subgradients for the construction of polyhedral outer approximation is not new and can---in a very similar setting---be found in~\cite{Lubin2017}.} of~\eqref{eq::AuxOpt}.
\begin{figure*}[h] \label{alg:basic_OA}
\footnotesize
\begin{center}
 \begin{minipage}{0.99\textwidth}
\rule{1\textwidth}{0.3mm}\\
\textbf{Algorithm~1: Outer Approximation for MICP}\\
\rule{1\textwidth}{0.3mm}\\
\textbf{Input:} Initial guess $z \in Z$ and a numerical tolerance $\varepsilon > 0$.\\

\textbf{Initialization:} Set $\Pi = \varnothing$ and $U = \infty$.\\

\textbf{Repeat:}\\[-0.3cm]
\begin{enumerate}
\setlength{\itemsep}{4pt}

\item 
Solve the convex optimization problem
\begin{eqnarray}
\label{eq::NLP}
f^\star(z) = \min_{x,y} \; f(x,z) \quad \mathrm{s.t.} \quad \left\{
\begin{array}{ll}
x = y & \mid \; \lambda \\[0.08cm]
A y = b \\[0.08cm]
y \in X \; .
\end{array} \right.
\end{eqnarray}

\item If~\eqref{eq::NLP} has no feasible solution, return a certificate of infeasibility. Otherwise, update
$$U \leftarrow \min \left\{ U, \, f^\star(z) \right\} \quad \text{and} \quad
\Pi \leftarrow \Pi \cup \{ z \} \; .$$

\item 
Solve the (extended) MILP
\begin{eqnarray}
\label{eq::MILP1}
\begin{array}{rcl}
(x^{+},y^+,z^+) \in & \underset{x \in X, y, z \in Z}{\mathrm{argmin}} & \sum_{i=1}^N y_i \\[0.2cm]
&\text{s.t.}&
\left\{
\begin{array}{l}
\forall i \in \{ 1, \ldots, N \}, \\[0.1cm]
\forall (\alpha_i, \beta_i, \gamma_i) \in \mathcal H_i \left( \Pi \right) \\[0.1cm]
\alpha_i^\tr x_i + \beta_i^\tr z_i + \gamma_i  \leq y_i \\[0.1cm]
A x = b
\end{array}
\right.
\end{array}
\end{eqnarray}

\item If $U - \sum_{i=1}^N y_i^+ \leq \varepsilon$, terminate.

\item 
Update $z \leftarrow z^+$ and go to Step~1.

\end{enumerate}

\removelastskip\rule{1\textwidth}{0.3mm}
\end{minipage}
\end{center}
\end{figure*}
Notice that Step~1 of Algorithm~1 solves~\eqref{eq::minlp} under the additional constraint that the integer $z$ is fixed. This implies that
$$f(x^\star,z) \geq V^\star$$
is an upper bound on the optimal objective value $V^\star$ of~\eqref{eq::minlp}. Thus, the current upper bound $U$ can be updated in Step~2. Moreover, the MILP~\eqref{eq::MILP} is (by construction) equivalent to solving the relaxed optimization problem\footnote{Since the inception of the idea of Gomory cuts in the 1960s~\cite{Gomory1960}, cutting plane methods for MILP have evolved significantly. Nowadays, there exist efficient algorithm and solvers for MILP and we refer to~\cite{Conforti2009} for an overview.}
\[
\min_{x \in X,z \in Z} \; \Phi(x,z,\Pi) \quad \mathrm{s.t.} \quad Ax = b \; ,
\]
which implies that
$$\sum_{i=1}^N y_i^+ \leq V^\star$$
is a lower bound on $V^\star$. Thus, the difference, $U - \sum_{i=1}^N y_i^+$, between the current upper and lower bounds can be used as a termination criterion, which is implemented in Step~3 of Algorithm~1. The following finite termination result for outer approximation is (at least in very similar versions) well-known in the literature~\cite{Duran_1986,Lubin2017}.

\begin{theorem}
\label{thm::termination1}
If Assumption~\ref{ass::convexity} is satisfied, then Algorithm~1 terminates after a finite number of iterations.
\end{theorem}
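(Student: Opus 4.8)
The plan is to combine finiteness of the integer feasible set with the tightness property of Lemma~\ref{lemma::lowerBound}. The crucial observation is that the master MILP~\eqref{eq::MILP1} can never return an integer vector that already lies in $\Pi$ without the termination test of Step~4 being triggered. Since $Z$ is a compact subset of the lattice $\mathbb Z^{Nm}$ and hence finite, and since every non-terminating pass through the loop appends a genuinely new element to $\Pi$, the algorithm must stop after at most $|Z|+1$ iterations.

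First I would settle well-posedness and the infeasible case. The feasible set of~\eqref{eq::NLP} in the variables $(x,y)$ does not depend on $z$, so if it is empty for the initial guess it is empty for every $z$; then~\eqref{eq::minlp} is infeasible and Algorithm~1 correctly terminates in Step~2 of the first iteration. Otherwise, Assumption~\ref{ass::convexity} ensures that~\eqref{eq::NLP} has a continuous objective over a non-empty compact feasible set, so $f^\star(z)$ is finite and attained for all $z\in Z$; and the master problem~\eqref{eq::MILP1}, being equivalent to minimizing the finite piecewise-affine function $\Phi(\cdot,\cdot,\Pi)$ over the compact set $\{x\in X,\ z\in Z : Ax=b\}$, is feasible, bounded below, and attains its minimum with $\sum_{i=1}^N y_i^+ = \min_{x\in X,\, z\in Z,\, Ax=b}\Phi(x,z,\Pi)$.

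The heart of the proof is the claim that, in any iteration, either Step~4 terminates or the new iterate satisfies $z^+\notin\Pi$. To see this, suppose $z^+\in\Pi$. Minimizing the master objective with $z=z^+$ held fixed and then applying Lemma~\ref{lemma::lowerBound} with $\Xi=\Pi$ yields
\begin{align*}
\sum_{i=1}^N y_i^+ \;=\; \min_{x\in X,\, z\in Z,\, Ax=b}\Phi(x,z,\Pi) \;=\; \min_{x\in X,\, Ax=b}\Phi(x,z^+,\Pi) \;=\; f^\star(z^+) .
\end{align*}
Because $z^+\in\Pi$, the value $f^\star(z^+)$ was used to update the incumbent $U$ in the Step~2 in which $z^+$ was added to $\Pi$, and $U$ is non-increasing, so $U\le f^\star(z^+)=\sum_{i=1}^N y_i^+$, i.e.\ $U-\sum_{i=1}^N y_i^+\le 0\le\varepsilon$, and the algorithm stops in Step~4. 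With the claim in hand I would conclude: every completed pass through Step~5 feeds $z^+$ into the next Step~2, which adds it to $\Pi$, and by the claim this $z^+$ was not already in $\Pi$, so $|\Pi|$ strictly increases at each non-terminating iteration; since $\Pi\subseteq Z$ and $|Z|<\infty$, the loop cannot repeat indefinitely.

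The step I expect to be delicate is the displayed chain of equalities, which rests entirely on Lemma~\ref{lemma::lowerBound}, and ultimately on the fact that the hyperplane slopes in $\mathcal H_i(\Pi)$ are built from the dual multiplier $\lambda^\star(z)$ of the copy constraint in~\eqref{eq::AuxOpt} rather than from an arbitrary subgradient of $f$. It is precisely this choice that makes the polyhedral outer approximation \emph{exact} at visited integer points, as recorded in~\eqref{eq::TightLowerBound}, and thereby precludes cycling. Everything else — the compactness arguments, the validity of the infeasibility certificate, and the monotone growth of $\Pi$ — is routine bookkeeping.
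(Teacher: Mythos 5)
Your proof is correct and follows essentially the same route as the paper's: you use the tightness of $\Phi$ at points of $\Pi$ (Lemma~\ref{lemma::lowerBound}) to show that a repeated integer iterate $z^+\in\Pi$ forces the termination test in Step~4 to fire, so $|\Pi|$ grows strictly at every non-terminating iteration and finiteness of $Z$ concludes. The only difference is presentational: you argue the key claim contrapositively, whereas the paper derives the same chain of inequalities as a contradiction.
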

\begin{proof}
See Appendix~\ref{app::termination1}.
\end{proof}

\begin{remark}
Algorithm~1 uses Hijazi's extended formulation~\cite{Hijazi2014} for constructing the MILPs~\eqref{eq::MILP}, which arguably exploit separability of the objective function to some extent. However, Algorithm~1 is not a fully distributed algorithm. In fact, a major disadvantage of Algorithm~1 becomes apparent, if one considers the special case that the constraint $A x = b$ happens to be redundant. In this case, the optimal solution of~\eqref{eq::minlp} could have been found with much less effort by solving the separable MICPs,
\[
\min_{x_i \in X_i,z_i \in Z_i} f_i(x_i,z_i)
\]
which have much fewer integer variables. However, if Algorithm~1 is applied to such a problem with redundant equality constraint, this property is not detected and a large number of large-scale NLPs and large scale MILPs might have to be solved instead, until convergence is achieved. The goal of this paper is to mitigate this deficiency of Algorithm~1 by proposing a partially distributed outer approximation algorithm that exploits the structure of the separable objective in a better way.
\end{remark}

\section{Partially distributed outer approximation algorithm}
\label{sec::PADOA}

This section introduces a partially distributed outer approximation optimization algorithm for finding $\varepsilon$-suboptimal solutions of~\eqref{eq::minlp}.

\subsection{Partially decoupled upper bounds}
The main idea of many distributed convex and local optimization methods is to solve a set of smaller-scale decoupled optimization problems in place of a single large one~\cite{Benders_1962,Boyd_2011,Dantzig_1960}. Similarly, consider partially decoupled optimization problems of the form
\begin{eqnarray}
\label{eq::pdIntro}
V_k(z) = &\underset{x,y,\zeta_k}{\min}& f_k(x_k,\zeta_k) + \Psi_k(x,z) \\[0.2cm]
&\text{s.t.}& \left\{ 
\begin{array}{ll}
x = y & \mid \; \lambda \\
A y = b \\
y \in X \\
\zeta_k \in Z_k
\end{array}
\right.
\end{eqnarray}
for $k \in \{ 1, \ldots, N \}$. Here, the integer variable $z \in Z$ is regarded as a fixed parameter and only the much smaller dimensional integer vector $\zeta_k \in Z_k$ is optimized. However, concerning the real-valued variables, the whole vector $x \in X$ is kept as an optimization variable. In this context, the shorthands
\[
\forall (x,z) \in X \times Z, \qquad \Psi_k(x,z) = \underset{j \neq k}{\sum} f_j(x_j,z_j)
\]
are introduced in order to keep the $x$-dependence of the remaining summands, i.e., all objective terms whose index is not equal to $k$. As in the previous section, $\lambda$ denotes the dual solution that is associated with the consensus constraint ``$x = y$''. Because the constraint $\zeta_k \in Z_k$ enforces integrality, strong duality of~\eqref{eq::pdIntro} does not hold in general. However, if $\zeta_k^\star(z)$ denotes an optimal solution of~\eqref{eq::pdIntro} for the integer variable and if Assumption~\ref{ass::convexity} holds, we still have
\begin{eqnarray}
V_k(z) = \max_{\lambda} &\underset{x,y}{\min}& \; \; f_k(x_k,\zeta_k^\star(z)) + \Psi_k(x,z) + \lambda^\tr(y-x) \notag \\
\label{eq::maxDualDef}
&\text{s.t.}& \; \; \left\{ 
\begin{array}{l}
A y = b \\
y \in X \; .
\end{array}
\right.
\end{eqnarray}
The proof of this statement is completely analogous to Proposition~\ref{prop::AuxOptDual}, i.e., if the linear coupling constraint $Ax =b$ has a solution in $X$, a maximizer of~\eqref{eq::maxDualDef} exists and can be used to define a suitable subgradient. Also note that the functions $V_k$ yield upper bounds on the objective value of~\eqref{eq::minlp},
\begin{align}
\label{eq::UpperBound1}
\forall z \in Z, \qquad \min_{k} \, V_k(z) \geq V^\star
\end{align}
At this point, it should be mentioned that one basic assumption of the algorithmic developments in this paper is that the complexity of the mixed integer optimization problems of interest depends mostly on the number of integer variables. This is in contrast to the number of real-valued variables, which may be assumed to have a negligible influence on the overall complexity of the mixed-integer optimization problem. In other words, we assume that~\eqref{eq::pdIntro} is much easier to solve than~\eqref{eq::minlp} in the sense that it contains much fewer integer variables, although both problems have the same number of real-valued variables. Here, it is important to keep in mind that, although the algorithmic developments in this paper are inspired by the field of distributed optimization, the algorithm in this paper is (at least in the form in which we present and analyze it) not fully distributed. This is because solving~\eqref{eq::pdIntro} requires the evaluation of the function $\Psi_k$, which, in turn, requires the evaluation of all functions $f_j$ with $j \neq k$.

\subsection{Partially decoupled lower bounds}
In this paper, we suggest to solve the decoupled MICPs~\eqref{eq::pdIntro} by lower level solvers that implement the traditional outer approximation algorithm that has been reviewed in Section~\ref{sec:OA_alg}. Notice that if Assumption~\ref{ass::convexity} is satisfied, strong duality holds, i.e., these lower level solvers will return piecewise affine models
\[
\Theta_k^\star: X \times Z_k \to \mathbb R \; ,
\]
which must satisfy the condition
\begin{eqnarray}
\label{eq::LowerLevelTermination}
V_k(z) - \epsL \; \leq \min_{x \in X,\zeta \in Z_k} \Theta_k^\star(x,\zeta) \quad \mathrm{s.t.} \quad A x = b 
\end{eqnarray}
upon termination. Here, $\epsL \geq 0$ denotes the numerical tolerance of the lower level OA solvers. Notice that the optimization problem on the right hand of~\eqref{eq::LowerLevelTermination} corresponds to the last MILP relaxation that is solved by the lower level OA solver. In practice the function $\Theta_k^\star$ can be stored by maintaining a set of hyperplane coefficients as explained in detail in the previous section.
Moreover, in order to avoid the accumulation of too many hyperplanes, one can discard all hyperplanes that are inactive at the optimal solution of the last MILP relaxation, because this operation does not affect the right hand expression of~\eqref{eq::LowerLevelTermination}.

The main idea of partially distributed outer approximation is to communicate the piecewise lower bounding functions $\Theta_k^\star$ to a central coordinator, who constructs a piecewise affine lower bound on the function $f$, solves a master MILP problem, and updates $z$. Here, one option is use the maximum over the function $\Theta_k^\star$ in order to obtain the lower bound
\begin{align}
\forall x \in X, \; \forall z \in Z, \qquad   \max_{k} \; \Theta_k^\star(x,z_k) \; \leq \; f(x,z) \; .
\end{align}
However, in order to arrive at a practical implementation, it is recommendable to further refine this bound. This can be done by maintaining a collection of integers, $\Pi \subseteq Z$, such that the function
\begin{align}
\Theta(x,z) = \max \left\{ \; \Phi(x,z,\Pi) \; , \; \max_{k} \; \Theta_k^\star(x,z_k) \; \right\} \; ,
\end{align}
can be used as a piecewise affine lower bound on $f$. Recall that the function $\Phi$, which has been introduced in the previous section, exploits the separability properties of $f$. The integer collection $\Pi$ is then maintained by updating
\[
\Pi \leftarrow \Pi \cup \{ \zeta^\star \} \; ,
\]
where $\zeta^\star = [ \zeta_1^\star, \zeta_2^\star, \ldots, \zeta_N^\star ]$ is an integer vector, whose components are optimal solutions for the integer variables of the partially decoupled problems~\eqref{eq::pdIntro}.

\subsection{Partially distributed outer approximation (PaDOA)}

\begin{figure*}[t] 
\footnotesize
\begin{center}
 \begin{minipage}{0.99\textwidth}
\rule{1\textwidth}{0.3mm}\\
\textbf{Algorithm~2: Partially Distributed Outer Approximation (PaDOA)}\\
\rule{1\textwidth}{0.3mm}\\
\textbf{Input:} Initial guess $z \in Z$ and a numerical tolerance $\varepsilon > 0$.\\

\textbf{Initialization:} Set $\Pi = \varnothing$, $\Theta(\cdot,\cdot) = {-\infty}$, and $U = \infty$.\\

\textbf{Repeat:}\\[-0.3cm]
\begin{enumerate}
\setlength{\itemsep}{4pt}

\item \label{step::pdminlp} Solve for all $k \in \{ 1, \ldots, N\}$ the partially decoupled MICPs
\begin{eqnarray}
\label{eq::pdminlp}
\begin{array}{rcl}
V_k(z) =& \underset{x,y,\zeta_k}{\min} & f_k(x_k,\zeta_k) + \Psi_k(x,z) \qquad \text{with} \qquad \Psi_k(x,z) = \underset{j \neq k}{\sum} f_j(x_j,z_j) \; . \\[0.2cm]
&\text{s.t.}& \left\{
\begin{array}{ll}
x = y & \mid \; \lambda \\
A y = b \\
y \in X \\
\zeta_k \in Z_k
\end{array} \right.
\end{array}
\end{eqnarray}
If \eqref{eq::pdminlp} is infeasible, terminate and return a certificate of infeasibility. Otherwise, update the set $\Pi \leftarrow \Pi \cup \{ \zeta_k^\star \}$ and construct a piecewise affine model $\Theta_k^*$ such that condition \eqref{eq::LowerLevelTermination} is satisfied.

\item \label{step::Terminate} Update the upper bound $U \leftarrow \min \left\{ U, \, V_1(z), \dots , V_N(z) \right\}$ and construct the piecewise lower bounding function $\Phi(x,z,\Pi)$ as in \eqref{eq::separableLowerBounds}.

\item Update the lower bound
\[
\forall x \in X, \, \forall z \in Z, \qquad \Theta(x,z) \leftarrow \max \left\{ \; \Theta(x,z) \; , \; \Phi(x,z,\Pi) \; , \; \max_{k} \; \Theta_k^\star(x,z_k) \; \right\} \; 
\]

\item \label{step::MILP} Solve the MILP problem
\begin{eqnarray}
\label{eq::MILP}
(x^+,z^+) \in \underset{x \in X, z \in Z}{\text{argmin}} & \Theta(x,z) \quad \mathrm{s.t.} \quad 
A x = b
\end{eqnarray}

\item \label{step::Return} If $U - \Theta(x^+,z^+) \leq \varepsilon$, terminate. Otherwise, update $z \leftarrow z^+$ and go to Step~1.

\end{enumerate}

\removelastskip\rule{1\textwidth}{0.3mm}
\end{minipage}
\end{center}
\end{figure*}

Algorithm~2 outlines a partially distributed algorithms for solving~\eqref{eq::minlp}. There are four main steps. In the first step, the partially decoupled MICPs of the form~\eqref{eq::pdIntro} are solved by using a traditional outer approximation method. Under the assumption that the original MICP~\eqref{eq::minlp} is feasible, the partially decoupled MICPs are feasible, too. Thus, the outer approximation solvers will return optimal integer solutions $\zeta_k^\star$ and associated piecewise affine lower bounds $\Theta_k^\star$ such that~\eqref{eq::LowerLevelTermination} is satisfied. The second step of Algorithm~2 updates the associated upper bound $U$ based on the inequality~\eqref{eq::UpperBound1} as well as the piecewise affine lower bound. In practice, this step is implemented by storing the union of all supporting hyperplane coefficients that are needed to represent $\Theta$.
Finally, the third step of Algorithm~2 solves a large scale MILP problem. This MILP is constructed in analogy to the corresponding step in the traditional outer approximation algorithm. It yields a lower bound,
\[
\Theta(x^+,z^+) \leq V^\star \; ,
\]
on the objective value $V^\star$ of~\eqref{eq::minlp}. Thus, the difference between the current upper and lower bounds,
\[
U - \Theta(x^+,z^+) \; ,
\]
can be used as a termination criterion, which is implemented in the fourth step of Algorithm~2. If the termination is not successful, the integer variables $z$ are updated, and the algorithm subsequently proceeds to the next iteration.

Notice that the main difference between Algorithms~1 and~2 is the introduction of partially decoupled MICP problems that can be solved separately and which contain much fewer integer variables than the orginal MICP~\eqref{eq::minlp}. The theoretical results in Section~\ref{sec::convergenceAnalysis} will elaborate further on the benefits of this alternation strategy. Moreover, in Section \ref{sec:sim} a numerical case study is examined, which illustrates the practical advantages of Algorithm~2.

\subsection{Relation to distributed local optimization methods}
The idea to ``augment'' the local objective functions $f_i$ with a suitable function $\Psi_i$ is frequently used in the context of distributed local optimization algorithms. For example, in the context of dual decomposition~\cite{Everett1963,Necoara2008}, one augments the separable functions $f_i$ with linear functions of the form\footnote{In the context of convex optimization Problem~\eqref{eq::minlp} is considered without integer variables $z$---this is why the functions $\Psi_i$ depend in the convex case on $x$ only.}
$$\Psi_i(x) = \sigma^\tr A x \; ,$$
where $\sigma$ is the current dual iterate. Similarly, in the context of ADMM or ALADIN, one uses augmented Lagrangians~\cite{Andreani2007,Powell1969}, as in
\begin{align}
\label{eq::augmentedLagrangian}
\Psi_i(x,y) = \sigma^\tr A x + \frac{\rho}{2} \Vert x - y \Vert^2 \; ,
\end{align}
where $z$ and $\sigma$ are the current primal and dual iterates; see~\cite{Boyd_2011,Eckstein1992,Houska_2016}. In fact, the construction of Algorithm~2 is inspired by the distributed local nonlinear programming method ALADIN. Here, we recall that ALADIN alternates between solving small-scale decoupled NLPs that are augmented by terms of the form~\eqref{eq::augmentedLagrangian} and large scale equality constrained quadratic programming problems that update $\sigma$ and $y$~\cite{Houska_2016}. This is in analogy to Algorithm~2, which alternates between solving decoupled MICPs (Step 1) and large-scale coupled MILPs (Step 3). However, unlike ALADIN, augmented Lagrangians are not used in Algorithm~2 as Lagrange multipliers in integer programming are not related to sensitivity and generally not applicable.

Also note that the construction of the functions $\Psi_i$ in Algorithm~2 also has similarities with Gauss-Seidel or more general block-coordinate descent methods~\cite{Tseng2001,Wright2015} in the sense that a partial decoupling is obtained by fixing some of the integer variables while others are optimized. However, despite all these analogies and similarities of Algorithm~2 with methods from the field of local and convex optimization, we would like to highlight that all these existing distributed optimization methods are not reliably applicable \cite{Takapoui_2016}.

\subsection{Convergence Analysis}
\label{sec::convergenceAnalysis}


In this section we provide a concise overview of the convergence 
properties of Algorithm~2.
The following theorem establishes one of the main results of this paper, namely, that Algorithm~2 converges after one iteration if the integer iterate, $z$, is initialized with an optimal solution of~\eqref{eq::minlp}. This is contrast to Algorithm~1, which does not necessarily terminate after a small number of steps---not even if it is initialized at an optimal solution.

\begin{theorem}
\label{thm::termination}
Let Assumption~\ref{ass::convexity} be satisfied and let $(x^\star,z^\star)$ be a minimizer of~\eqref{eq::minlp}. If Algorithm~2 is initialized with $z = z^\star$ and if the termination tolerances of the lower level solvers satisfy $\epsL \leq \epsilon$, then the termination criterion in Step~4 is satisfied. In other words, the algorithm terminates after one step.
\end{theorem}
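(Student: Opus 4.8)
The plan is to show that when $z = z^\star$, the upper bound $U$ computed in Step~2 equals $V^\star$, and simultaneously the lower bound $\Theta(x^+,z^+)$ computed in Steps~3--4 is at least $V^\star - \epsilon$; together these yield $U - \Theta(x^+,z^+) \le \epsilon$, which triggers termination in Step~5. So the argument naturally splits into an upper-bound part and a lower-bound part.

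First I would handle the upper bound. With $z = z^\star$ fixed, each partially decoupled MICP~\eqref{eq::pdminlp} for index $k$ optimizes $\zeta_k \in Z_k$ and the real vector $x \in X$ subject to $Ax = b$. Since $(x^\star, z^\star)$ is feasible for~\eqref{eq::minlp}, the choice $\zeta_k = z_k^\star$, $x = x^\star$, $y = x^\star$ is feasible for~\eqref{eq::pdminlp} with objective value $f_k(x_k^\star, z_k^\star) + \Psi_k(x^\star, z^\star) = f(x^\star, z^\star) = V^\star$. On the other hand, every feasible point of~\eqref{eq::pdminlp} gives a point $(x, \tilde z)$ with $\tilde z = (z_1^\star, \dots, \zeta_k, \dots, z_N^\star) \in Z$, $x \in X$, $Ax = b$, whose objective in~\eqref{eq::pdminlp} equals $f(x, \tilde z) \ge V^\star$; this is exactly inequality~\eqref{eq::UpperBound1}. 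Hence $V_k(z^\star) = V^\star$ for every $k$, and so the Step~2 update gives $U = V^\star$ (assuming $U$ was initialized to $\infty$, i.e. this is the first iteration).

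Next I would handle the lower bound. The key observation is that the optimal integer vector $\zeta_k^\star$ for subproblem $k$ may be taken to be $z_k^\star$ in its $k$-th block, so that the set $\Pi$ after Step~1 contains the point $z^\star$ (or at least a point whose contributions suffice); more robustly, I would argue via the lower-level termination condition~\eqref{eq::LowerLevelTermination}: the lower-level OA solver for subproblem $k$ returns $\Theta_k^\star$ with $V_k(z^\star) - \epsL \le \min_{x \in X,\, \zeta \in Z_k,\, Ax = b} \Theta_k^\star(x, \zeta)$. Since $\Theta(x,z) \ge \max_k \Theta_k^\star(x, z_k)$ pointwise, we get, for the MILP solution $(x^+, z^+)$ of~\eqref{eq::MILP},
\begin{align*}
\Theta(x^+, z^+) \;=\; \min_{x \in X,\, z \in Z,\, Ax=b} \Theta(x,z) \;\ge\; \min_{x \in X,\, z \in Z,\, Ax=b} \max_k \Theta_k^\star(x, z_k) \;\ge\; \min_k \Big( V_k(z^\star) - \epsL \Big) \;=\; V^\star - \epsL,
\end{align*}
where the last min-over-$k$ step uses that relaxing the coupling between the blocks $z_k$ only lowers the value. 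Combining with $U = V^\star$ gives $U - \Theta(x^+,z^+) \le \epsL \le \epsilon$, so Step~5 terminates.

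The main obstacle I anticipate is the middle inequality in the display above: passing from $\min_{x,z} \max_k \Theta_k^\star(x,z_k)$ to $\min_k \min_{x,\zeta} \Theta_k^\star(x,\zeta)$. One must be careful that in the master MILP the single integer vector $z$ is shared across all the $\Theta_k^\star$ terms, whereas in~\eqref{eq::LowerLevelTermination} each subproblem $k$ is free to pick its own $\zeta \in Z_k$ for the $k$-th block only. The clean way is: for any feasible $(x,z)$ of the master MILP and any fixed $k$, the point $(x, z_k)$ is feasible for the right-hand minimization in~\eqref{eq::LowerLevelTermination}, hence $\Theta_k^\star(x, z_k) \ge V_k(z^\star) - \epsL$; taking the max over $k$ and then the min over $(x,z)$ preserves this, and finally $\min_k V_k(z^\star) = V^\star$. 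I would also note the harmless edge case where $\Pi$ contains $z^\star$, in which case Lemma~\ref{lemma::lowerBound} gives $\min_{x \in X,\, Ax=b} \Phi(x, z^\star, \Pi) = f^\star(z^\star) = V^\star$ directly, providing an alternative route to the lower bound; but the argument via~\eqref{eq::LowerLevelTermination} is self-contained and does not require tracking exactly which points enter $\Pi$.
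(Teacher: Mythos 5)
Your proposal is correct and follows essentially the same route as the paper's proof: establish $U = V^\star$ via $V_k(z^\star) = V^\star$ for all $k$, then use the lower-level termination condition~\eqref{eq::LowerLevelTermination} together with the fact that $\Theta$ dominates each $\Theta_k^\star$ to get $\Theta(x^+,z^+) \geq V^\star - \epsL$, hence $U - \Theta(x^+,z^+) \leq \epsL \leq \epsilon$. Your treatment is in fact slightly more detailed than the paper's, which asserts $V_k(z^\star)=V^\star$ and the key coupling inequality without the justification you spell out.
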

\begin{proof}
See Appendix~\ref{app::termination}.
\end{proof}


Notice that the statement of the above theorem is of fundamental relevance and a very favorable property of PaDOA. If we work with other global optimization methods, say branch-and-bound, an empirical observation is that such existing global optimization algorithm often find a global solution early on but then keep on iterating until the lower bound is accurate enough to prove global optimality. In contrast to this, PaDOA terminates as soon as a global minimizer is added to the collection $\Pi$. In fact, Theorem~\ref{thm::termination} implies that global optimality of a point $z^\star \in Z$ can be verified by solving the $N$ instances of the partially decoupled MICPs and the master MILP~\eqref{eq::MILP}. Notice that this result is not in conflict with existing results from the field of complexity theory, because the master MILP~\eqref{eq::MILP} remains NP-hard~\cite{Garey1979,Murty1987}.

\begin{remark}
The result of Theorem~\ref{thm::termination} relies heavily on the convexity of the functions $f_i$ on the convex hull of $X_i \times Z_i$, although this fact is not highlighted explicitly in the proof. This convexity assumption is first of all required implicitly by our assumption that the lower level solvers return piecewise level models $\Theta_k$, which satisfy the termination condition~\eqref{eq::LowerLevelTermination} (this assumption is only reasonable if strong duality holds) and which need to be global lower bounds on $f$. These properties are in general all not satisfied if one considers more general non-convex MINLPs. 
\end{remark}

The following theorem establishes the fact that Algorithm~2 converges after a finite number of iterations under exactly the same conditions under which convergence of Algorithm~1 can be established.

\begin{theorem}\label{thm::PaDOA}
Let Assumption~\ref{ass::convexity} be satisfied.
If the termination tolerances of the lower level solvers satisfy $\epsL \leq \epsilon$, then
Algorithm~2 terminates after a finite number of steps (independently of the initialization).
\end{theorem}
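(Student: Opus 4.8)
The plan is to follow the finite-termination argument for Algorithm~1 (Theorem~\ref{thm::termination1}), the only new ingredient being a careful bookkeeping of how the partially decoupled subproblems feed the master lower bound $\Theta$. I would first record that the algorithm is well posed: each $Z_i$ is a compact subset of $\mathbb Z^m$, hence finite, so $Z = Z_1 \times \cdots \times Z_N$ is a finite set; by Theorem~\ref{thm::termination1} every lower-level outer-approximation call in Step~1 terminates after finitely many iterations and, since strong duality holds for~\eqref{eq::pdminlp}, returns an integer optimizer $\zeta_k^\star$ together with a piecewise affine model $\Theta_k^\star$ satisfying~\eqref{eq::LowerLevelTermination}; and since $X$ is a compact polytope and $Z$ is finite, the master MILP in Step~4 attains its minimum whenever it is feasible, infeasibility being reported already in Step~1 (note that~\eqref{eq::pdminlp} is feasible exactly when~\eqref{eq::minlp} is). Assuming from here on that no iteration reports infeasibility, I would then record two monotonicity facts that drive the proof: the upper bound $U$ is non-increasing and, immediately after a parameter $z$ has been processed, satisfies $U \le \min_k V_k(z)$; and the function $\Theta$ is pointwise non-decreasing, since Step~3 maximizes against the previous $\Theta$ (and $\Theta \le f$ throughout by~\eqref{eq::LowerBoundF}, which is what makes the final termination test meaningful).

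The core step is a ``tightness at processed parameters'' estimate: if $z \in Z$ was the parameter processed at some iteration, then at that and every later iteration one has $\Theta(x,z) \ge U - \epsL$ for all $x \in X$ with $Ax = b$. To prove this, fix an index $k$ and read~\eqref{eq::LowerLevelTermination} as the statement that $\Theta_k^\star(x,\zeta) \ge V_k(z) - \epsL$ for every admissible pair $(x,\zeta)$ with $x \in X$, $Ax = b$, $\zeta \in Z_k$; evaluating at the admissible choice $\zeta = z_k \in Z_k$ gives $\Theta_k^\star(x,z_k) \ge V_k(z) - \epsL$ for all such $x$. Taking the maximum over $k$, and then invoking Step~3, the monotonicity of $\Theta$, and the chain $U \le \min_k V_k(z) \le \max_k V_k(z)$ together with the monotonicity of $U$, yields $\Theta(x,z) \ge \max_k V_k(z) - \epsL \ge U - \epsL$. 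The $\Phi(x,z,\Pi)$ summand inside $\Theta$ only tightens this bound and is not needed here.

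I would then close with the usual pigeonhole argument. Let $z^{(0)}, z^{(1)}, \dots$ denote the sequence of processed parameters, with $z^{(j+1)}$ the integer part of the MILP solution of Step~4 at iteration $j$. If the algorithm never terminated, this infinite sequence would lie in the finite set $Z$ and hence repeat a value, so there is an iteration $\ell$ with $z^{(\ell+1)} = z^{(j)}$ for some $j \le \ell$. Applying the tightness estimate at iteration $\ell$ to the MILP minimizer $(x^+, z^+)$, whose integer part $z^+ = z^{(j)}$ was already processed at iteration $j \le \ell$, gives $\Theta(x^+,z^+) \ge U - \epsL \ge U - \epsilon$ because $\epsL \le \epsilon$; thus $U - \Theta(x^+,z^+) \le \epsilon$ and Step~5 terminates the algorithm at iteration $\ell$, contradicting non-termination. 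Hence the algorithm stops after finitely many steps, in fact within at most $|Z|+1$ iterations, regardless of the initialization.

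I expect the delicate point to be the tightness estimate: one must check that~\eqref{eq::LowerLevelTermination}, which controls $\Theta_k^\star$ only over the set where the local integer argument $\zeta$ is free to range over all of $Z_k$, still yields a usable bound at the single value $z_k$ (which works precisely because $z_k \in Z_k$), and one must track carefully which iteration's $\Theta_k^\star$ and which current value of $U$ are being used, relying on the monotone accumulation of cuts in $\Theta$ and the monotonicity of $U$. The remaining pigeonhole step is routine and identical in spirit to the classical outer-approximation convergence proof.
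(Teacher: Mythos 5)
Your proof is correct and follows essentially the same route as the paper: both arguments rest on the finiteness of $Z$, the monotonicity of $U$ and $\Theta$, and the observation from~\eqref{eq::LowerLevelTermination} that if the master MILP returns an integer vector that was already processed in Step~1, then $U - \Theta(x^+,z^+) \leq \epsL \leq \epsilon$ and Step~5 fires. The paper merely packages the counting differently (a set $\tilde\Pi$ of master-MILP integer solutions whose cardinality must strictly increase while termination fails), which is equivalent to your pigeonhole over the sequence of processed parameters.
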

\begin{proof}
See Appendix~\ref{app::PaDOA}.
\end{proof}

\section{Implementation and Case Study} \label{sec:sim}

The Partially Distributed Outer Approximation method is implemented in MATLAB R2017b.
The optimization subproblems are solved using Gurobi~\cite{GUROBI2009}
implemented via CasADi~v1.9.0~~\cite{Andersson_2018}. All numerical experiments
were run on a 2.9GHz Intel Core i5-4460S CPU with 8GB of RAM.


\subsection{Problem Description}
An important problem in the planning and operation of a heating and/or cooling system is the scheduling of so-called Thermostatically Controlled Loads (TCLs) \cite{Kohlhepp_2017}. These are devices that are used to regulate the temperature of a room/building within a certain user-defined interval known as a ``deadband''. The optimal operation strategy is especially difficult to determine when a non-constant cost function is introduced for a population of heterogeneous TCLs \cite{Zhang2012}. The cost function may represent the cost of electricity or user-discomfort from noise generation. Regardless, such devices typically only have an ``on'' and an ``off'' setting and thus the resulting scheduling problem can be formulated as a binary MIP as seen in \eqref{eq:tcl} for $R$ regions with a finite time horizon $H$. The equations in \eqref{eq:tcl} are based on the formulation given in \cite{Koch_2011}, but with dynamics modeling the interaction between each region and a linear cost function instead of a quadratic.

\begin{subequations} \label{eq:tcl}
	\begin{align} 
	\label{eq:SepProbObj}
	&\min\limits_{T(\cdot),u(\cdot)} \sum\limits_{t=0}^{H-1} c(t)u(t) + \gamma (T_i(t) - T_{ref}(t))^2, \\
	\label{eq:SepProbEqcnstr2}
	&\text{subject to} \hspace{2mm} \forall i\in \{1,\dots,R\}, \nonumber\\
	& \underline{T}_i \leq T_i(t) \leq \overline{T}_i,  \quad \forall t\in \{0,\dots,H\}  \\
	\label{eq:SepProbIneqcnstr2} 
	&u_i(t)\in \{0,1\}, \hspace*{.75cm} \forall t\in \{0,\dots,H-1\} \\
	&\forall t\in \{0,\dots,H\},\nonumber\\
	\label{eq:consConstr}
	&T_i(t+1)=T_i(t)+b_i u_i(t)+a_i\left(\frac{T_i(t)+T_{amb}(t)+\sum_{j \in N(i)}T_j(t)}{|N(i)|+2} - T_i(t)\right),  
	\end{align}
\end{subequations}

\noindent where $c(t)$ is the vector of device costs at time $t$, $\gamma$ is a comfort parameter, $\underline{T}_i$ and $\overline{T}_i$ are the deadband temperature limits of device $i$, $a_i$ and $b_i$ are heat transfer parameters, $T_{amb}(t)$ is the ambient temperature at time $t$ and $N(i)$ are the number of regions neighbouring $i$. Equation \eqref{eq:consConstr} models the thermodynamics of each room in a simplified manner, i.e., it takes an average of the current and surrounding temperatures to update the temperature of the next time step. 
This formulation results in $H+1$ real-valued and $H$ binary variables per region. Figure \ref{fig:rooms} shows two possible initial configurations of \eqref{eq:tcl}. The ambient temperature is taken from \cite{WetterDienst} for two days in June 2017 in the Karlsruhe (Germany) area. High prices of \$25.67/kW are set from 2pm to 8pm (time steps 6 to 12 and 29 to 35) with low and medium prices of \$2.46/kW and \$4.62/kW in all other time steps. Each region is initialized at 20 degrees with $a_i=0.2$ and $b_i=-2$.

\begin{figure}[ht]\label{fig:rooms}
	\begin{center}
		\begin{tabular}{ccc}
			\includegraphics[width = 0.5\textwidth]{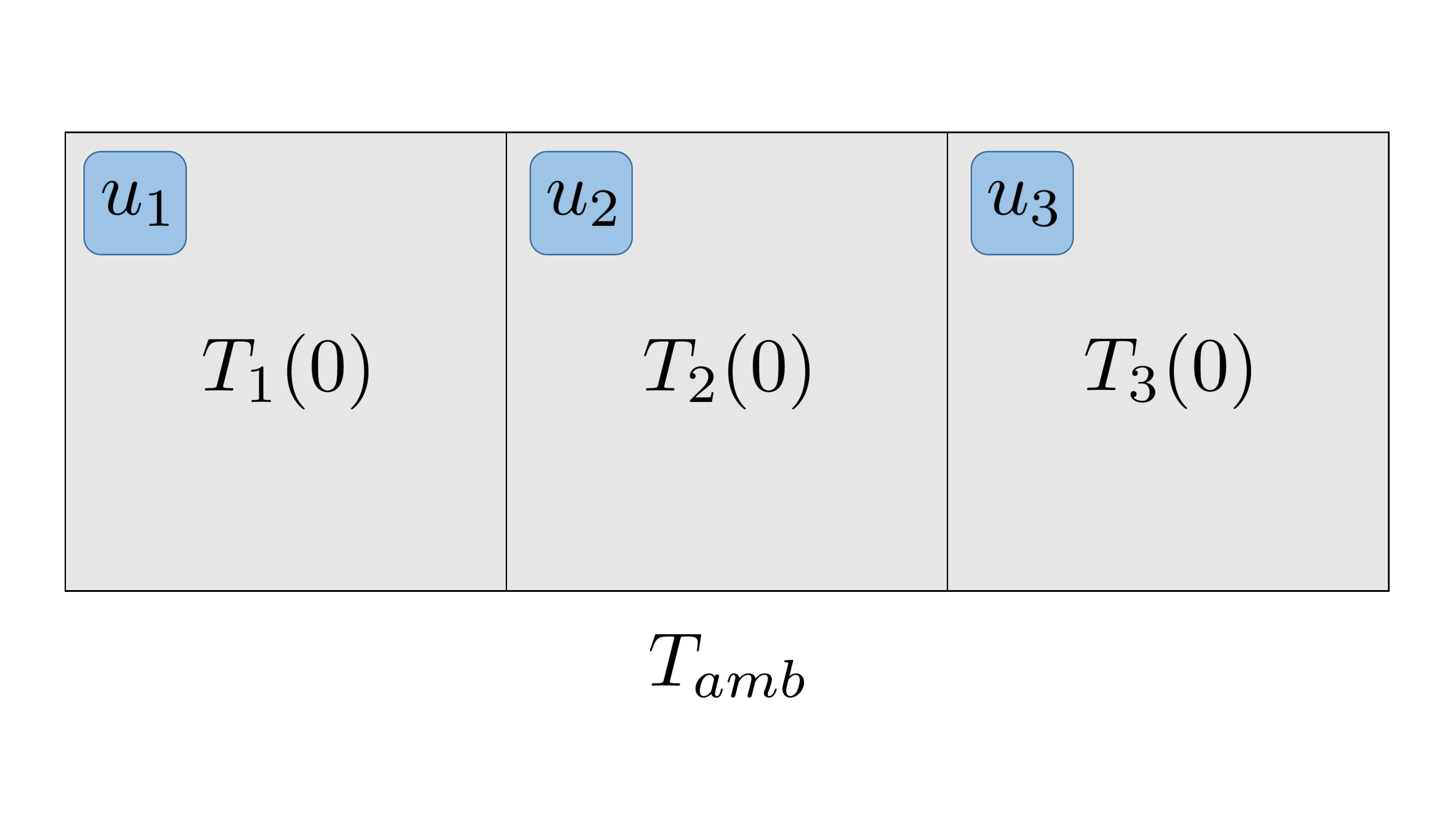}
			&
			\includegraphics[width = 0.5\textwidth]{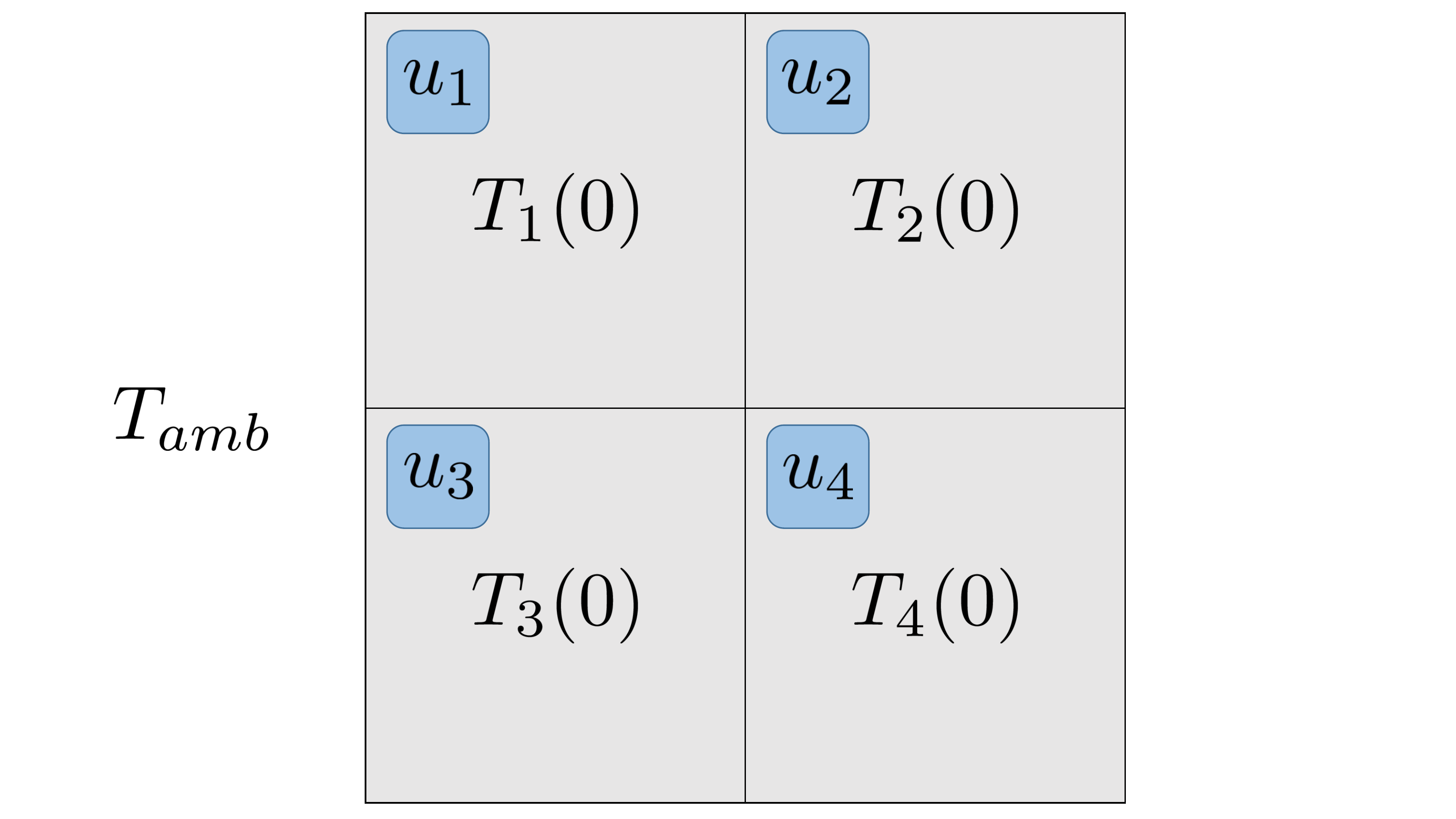}	
		\end{tabular}
	\end{center}
	\caption{Two room configurations with controlled cooling elements $u_i$, ambient temprature $T_{amb}$ and initial temperatures $T_i(0)$.}
\end{figure}

\subsection{Results for MILP}\label{sec:milp_sim}

If the comfort parameter $\gamma$ is taken to be zero then Problem \eqref{eq:tcl} is linear and separable but coupled in both its discrete and real-valued variables. Shown in Tables \ref{t:3room} and \ref{t:4room} are the simulation results for each configuration, respectively. The results of Algorithm~2 are compared with results obtained from a Branch and Bound approach as implemented in Bonmin with default settings \cite{Bonami_2008} as well as the commercial MIQP solvers Gurobi and CPLEX~\cite{CPLEX2009}. An example solution for the 3 room case is depicted in Figure \ref{fig:params}.

\begin{figure}[th!]\label{fig:params}
	\begin{center}
		\begin{tabular}{ccc}
			\includegraphics[width = 0.9\textwidth]{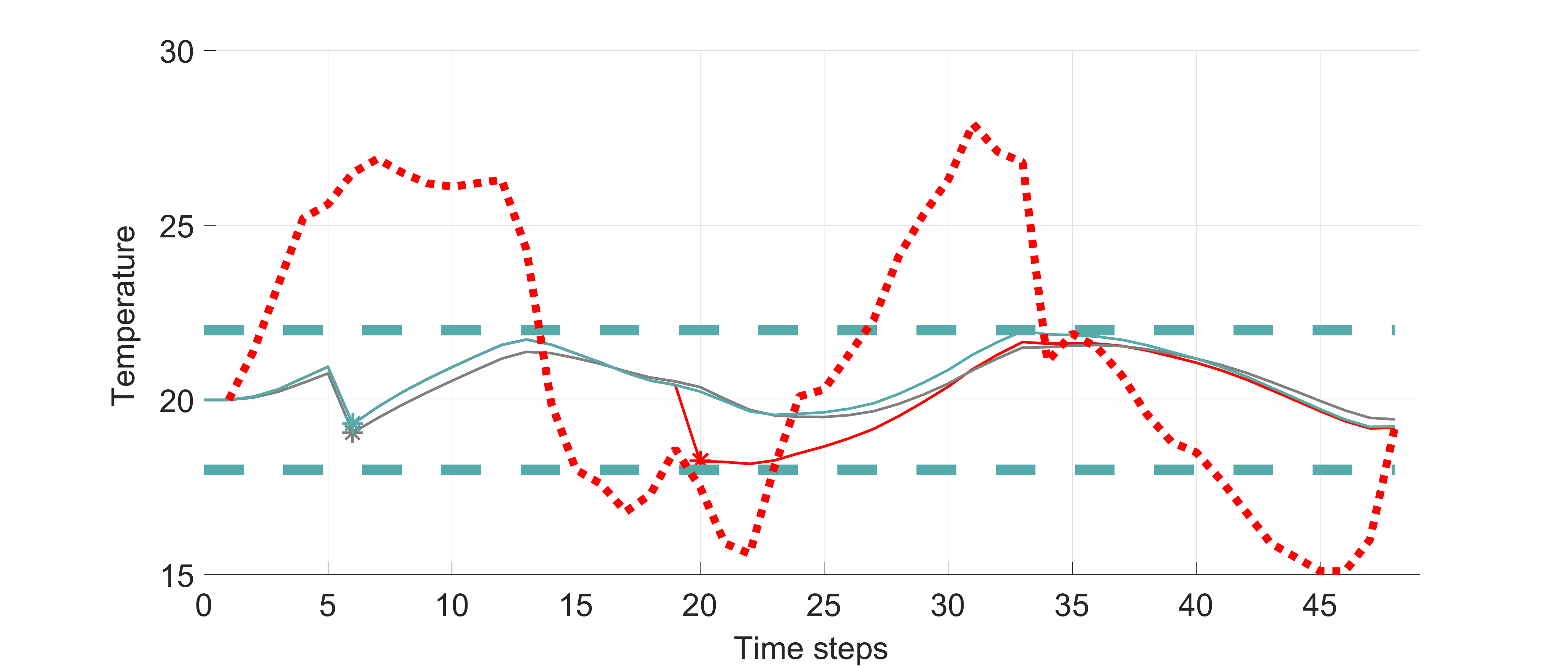}	
		\end{tabular}
	\end{center}
	\caption{The red dotted line is the ambient temperature, the blue dotted lines are the limits of the temperature deadzone and the solid lines are the temperature trajectories of each region in the three room scenario. Highlighted on the trajectories are points where the coolers are activated.}
\end{figure}


\begin{table}[ht]	
	\setlength{\tabcolsep}{4pt}
	\renewcommand{\arraystretch}{1.3}
	\begin{center}
		\caption{Results obtained for the TCL problem with a 3-room configuration.}
		\label{t:3room}
		\begin{tabular}{|c|c|c|c|c|c|}
			\hline
				& Time steps:& 8 & 24  & 48  & 62 \\
			\hline
					& obj. 		&0	 &13.86	&16.32	&21.23	\\
			Alg.~2	& time(s) 	&0.15&0.20	&0.89	&5.23	\\
					& iter. 	&2	&1	&2	&2	\\
			\hline
			B-OA	&obj. 		&0	 &13.86	&16.32	&21.23	\\
					&time(s) 	&0.15&31.06	&1,345	&52,395	\\
			\hline
			B-BB	&obj. 		&0	 &13.86	&16.32	&21.23	\\
					&time(s) 	&0.15	&29.19	&480.27	&828.08	\\
			\hline
			Gurobi	&obj. 		&0	 &13.86	&16.32	&21.23	\\
					&time(s) 	&0.22	&0.56	&0.85	&4.39	\\
			\hline
			CPLEX   &obj. 		&0	 &13.86	&16.32	&21.23	\\
					&time(s) 	&0.07	&0.16	&0.54	&2.81	\\
			\hline
		\end{tabular}
	\end{center}
\end{table}


\begin{table}[ht]	
	\setlength{\tabcolsep}{4pt}
	\renewcommand{\arraystretch}{1.3}
	\begin{center}
		\caption{Results obtained for the TCL problem with a 4-room configuration.}
		\label{t:4room}
		\begin{tabular}{|c|c|c|c|c|c|}
			\hline
			& Time steps:& 8 & 24  & 48  & 62 \\
			\hline
			& obj. 		&0	 &9.24	&9.24	&11.69	\\
			Alg.~2	& time(s) 	&0.17&0.25	&0.39	&1.41	\\
			& iter. 	&2	&2	&2	&2	\\
			\hline
			B-OA	&obj. &0	 &9.24	&9.24	&11.69	\\
			&time(s) 	&0.14&28.34	&48.42	&1,610	\\
			\hline
			B-BB	&obj. 	&0	 &9.24	&9.24	&11.69	\\
			&time(s) 	&0.18	&33.11	&56.83	&822.48	\\
			\hline
			Gurobi	&obj. 	&0	 &9.24	&9.24	&11.69	\\
			&time(s) 	&0.22	&0.37	&0.51	&1.08	\\
			\hline
			CPLEX   &obj. 	&0	 &9.24	&9.24	&11.69	\\
			&time(s) 	&0.07	&0.09	&0.11	&0.48	\\
			\hline
		\end{tabular}
	\end{center}
\end{table}

\begin{table}[ht]	
	\setlength{\tabcolsep}{4pt}
	\renewcommand{\arraystretch}{1.3}
	\begin{center}
		\caption{Results obtained for the TCL problem with a linear room configuration.}
		\label{t:nroom}
		\begin{tabular}{|c|c|c|c|c|c|c|c|c|}
			\hline
			& Rooms:	 & 7 & 7  & 7  & 10  & 12 & 18 & 20 \\
			\hline
			& Time steps:& 8 & 24 & 48 & 48 & 36 & 24 & 24\\
			\hline
			& obj. 		&0	 &23.1	&28.02	 & 37.26 & 41.88 & 50.82 & 55.44	\\
			Alg.~2	& time(s) 	&0.15&1.03		&4,533.4&9,127.5&8,294&853.7&16,658	\\
			& iter. 	&2	&2	&2	&2&2&2&2\\
			\hline
			B-OA		&obj. 		&0	  &23.1  & N/A& N/A& N/A& N/A& N/A	\\
			&time(s) 	&0.16 & 5134 & N/A& N/A& N/A& N/A& N/A	\\
			\hline
			B-BB	&obj. 	&0	 &23.1	&28.02	  & 37.26 & 41.88 & 50.82 & 55.44	\\
			&time(s) 	&0.18	&690.23 & 1,682.5  &3,076.0&3,261.5&2,999.1&4,658.4	\\
			\hline
			Gurobi		&obj. 		&0	 	&23.1	&28.02  &N/A & 41.88 & 50.82 & N/A	\\
			&time(s) 	&0.25   &0.93	&56.09  &N/A & 56,842 & 31,926 & N/A	\\
			\hline
			CPLEX   &obj. 		&0	 &23.1	&28.02	 & 37.26 & 41.88 & 50.82 & N/A	\\
			&time(s) 	&0.08 & 0.65 & 150.31& 13,416 & 12,885 & 57,108 & $>245,000$	\\
			\hline
		\end{tabular}
	\end{center}
\end{table}

At first glance, the results from Tables \ref{t:3room} and \ref{t:4room} may seem surprising since the 4 room case has more space to keep cool but nonetheless is able to do so at a lower cost than the 3 room case. This is due to an insulation effect that the 4 room configuration enjoys. With the activation of two coolers in the first six time steps, the room temperatures can stay within their deadbands for the entire 48 hour period. In contrast, the 3 room configuration is more susceptible to the ambient temperature and requires more use of the coolers. This also seems to have increased the computational complexity of the problem and requires more time for the 3 room case to be solved than the 4 room case. It should be noted that several initializations were tested and the solution times were not significantly affected, implying that this was not the cause of the runtime differences in the two cases.


One of the advantages of using a distributed method is the ability to solve problems that would be otherwsie intractable for a centralized solver. Tables \ref{t:3room} and \ref{t:4room} show results for cases containing up to 496 variables, but even larger problems may be considered. Table \ref{t:nroom} shows results for a variety of time horizons and rooms. Here, the room configuration is instead arranged such that the rooms are in a line. While unrealistic for most buildings, this setup is realistic for the temperature control of a train or rooms next to a corridor. Mathematically, this example differs somewhat from the other two. While the other problems have a significant amount of coupling between the control variables, the at is not the case here. This sparsity allows for Algorithm~2 to outperform both Gurobi and CPLEX (applied to the centralized problem). 

\subsection{Results for MIQP}\label{sec:miqp_sim}
If the comfort parameter $\gamma$ is larger than zero then Problem \eqref{eq:tcl} is a convex MIQP.\footnote{Convex in the same notion of convexity in MICP. That is, a problem where the continuous relaxation yields a convex quadratic program.} As in Section \ref{sec:milp_sim}, The results of Algorithm~2 for each room configuration are compared with those obtained from Bonmin, Gurobi, and CPLEX. The value of $\gamma$ was chosen to be one to allow for an equal weighting of comfort and cost.

Shown in Figure \ref{fig:params2} are the trajectories obtained for the three-room scenario with a temperature deviation penalization. In contrast to Figure \ref{fig:params}, a quadratic penalty term is used to model discomfort caused by deviations from the set temperature. Indeed, the solution with $\gamma=1$ yields a trajectory with a similar number of activations as when $\gamma=0$ but with temperature trajectories that stay much closer to the middle of the deadband. Theoretically, the quadratic term should make the problem more computationally difficult, but in some cases both Gurobi and CPLEX actually require less time. In contrast, Algorithm~2 requires many more iterations than the MILP formulation. Future work could seek to use some of the cutting plane methods and other heuristics used by Gurobi and CPLEX to alleviate this issue. Furthermore, quadratic lower bounding functions could significantly reduce the number of iterations until convergence.


\begin{table}[b!ht]	
	\setlength{\tabcolsep}{4pt}
	\renewcommand{\arraystretch}{1.3}
	\begin{center}
		\caption{Results obtained for Problem \eqref{eq:tcl} with a 3-room configuration.}
		\label{t:miqp3room}
		\begin{tabular}{|c|c|c|c|c|c|}
			\hline
			& Time steps:& 8 & 24  & 48  & 62 \\
			\hline
					& obj. 		&17.83	 &60.15	&104.07	&134.43	\\
			Alg.~2	& time(s) 	&0.24 & 0.49 & 1.36 & 3.17	\\
					& iter. 	&4	&3	&3	&3	\\
			\hline
			B-OA	&obj. 		&17.83	 &60.15	&104.07	&134.43	\\
					&time(s) 	&4.36 & 46.94 & 399.49 & 1,702.60	\\
			\hline
			B-BB	&obj. 		&17.83	 &60.15	&104.07	&134.43	\\
					&time(s) 	&9.09 & 70.40 & 502.76 & 1,152.90	\\
			\hline
			Gurobi	&obj. 		&17.83	 &60.15	&104.07	&134.43	\\
					&time(s) 	&0.44 & 0.50 & 0.66 & 0.83	\\
			\hline
			CPLEX   &obj. 		&17.83	 &60.15	&104.07	&134.43	\\
					&time(s) 	&0.19 & 0.27 & 0.25 & 0.54	\\
			\hline
		\end{tabular}
	\end{center}
\end{table}


\begin{table}[ht]	
	\setlength{\tabcolsep}{4pt}
	\renewcommand{\arraystretch}{1.3}
	\begin{center}
		\caption{Results obtained for Problem \eqref{eq:tcl} with a 4-room configuration.}
		\label{t:miqp4room}
		\begin{tabular}{|c|c|c|c|c|c|}
			\hline
			& Time steps:& 8 & 24  & 48  & 62 \\
			\hline
					& obj. 		&21.68	 &52.26	&99.68	&134.43	\\
			Alg.~2	& time(s) 	&0.37 & 0.65 & 8.82 & 31.07	\\
					& iter. 	&5	&3	&3	&4	\\
			\hline
			B-OA	&obj. 		&21.68	 &52.26	&99.68	&134.43	\\
					&time(s) 	&14.61 & 6.78 & 613.95 & 7,024.10\\
			\hline
			B-BB	&obj. 		&21.68	 &52.26	&99.68	&134.43	\\
					&time(s) 	&30.39 & 11.13 & 661.95 & 1,495.30	\\
			\hline
			Gurobi	&obj. 		&21.68	 &52.26	&99.68	&134.43	\\
					&time(s) 	&0.41 & 0.54 & 0.72 & 2.31	\\
			\hline
			CPLEX   &obj. 		&21.68	 &52.26	&99.68	&134.43	\\
					&time(s) 	&0.15 & 0.16 & 0.41 & 2.15	\\
			\hline
		\end{tabular}
	\end{center}
\end{table}


\begin{table}[ht]	
	\setlength{\tabcolsep}{4pt}
	\renewcommand{\arraystretch}{1.3}
	\begin{center}
		\caption{Results obtained for Problem \eqref{eq:tcl} with a linear 7 room configuration.}
		\label{t:miqp7room}
		\begin{tabular}{|c|c|c|c|c|c|}
			\hline
			& Time steps:& 8 & 24  & 48  & 62 \\
			\hline
					& obj. 		&39.50	 &112.08	&201.19	&267.04	\\
			Alg.~2	& time(s) 	&1.57 & 2.55 & 147.76 & 1022.38	\\
					& iter. 	&4	&3	&4	&3	\\
			\hline
			B-OA	&obj. 		&39.50	 &112.08	&N/A	&N/A	\\
					&time(s) 	&358.12 & 1,833.9 & N/A & N/A\\
			\hline
			B-BB	&obj. 		&39.50	 &112.08	&201.19	&267.04	\\
					&time(s) 	&593.17 & 955.77 & 6,095.6 & 15,582	\\
			\hline
			Gurobi	&obj. 		&39.50	 &112.08	&201.19	&267.04	\\
					&time(s) 	&0.95 & 0.72 & 10.83 & 65.59	\\
			\hline
			CPLEX   &obj. 		&39.50	 &112.08	&201.19	&267.04	\\
					&time(s) 	&0.93 & 0.77 & 6.53 & 17.50	\\
			\hline
		\end{tabular}
	\end{center}
\end{table}

\begin{figure}[th!]\label{fig:params2}
	\begin{center}
		\begin{tabular}{ccc}
			\includegraphics[width = 0.9\textwidth]{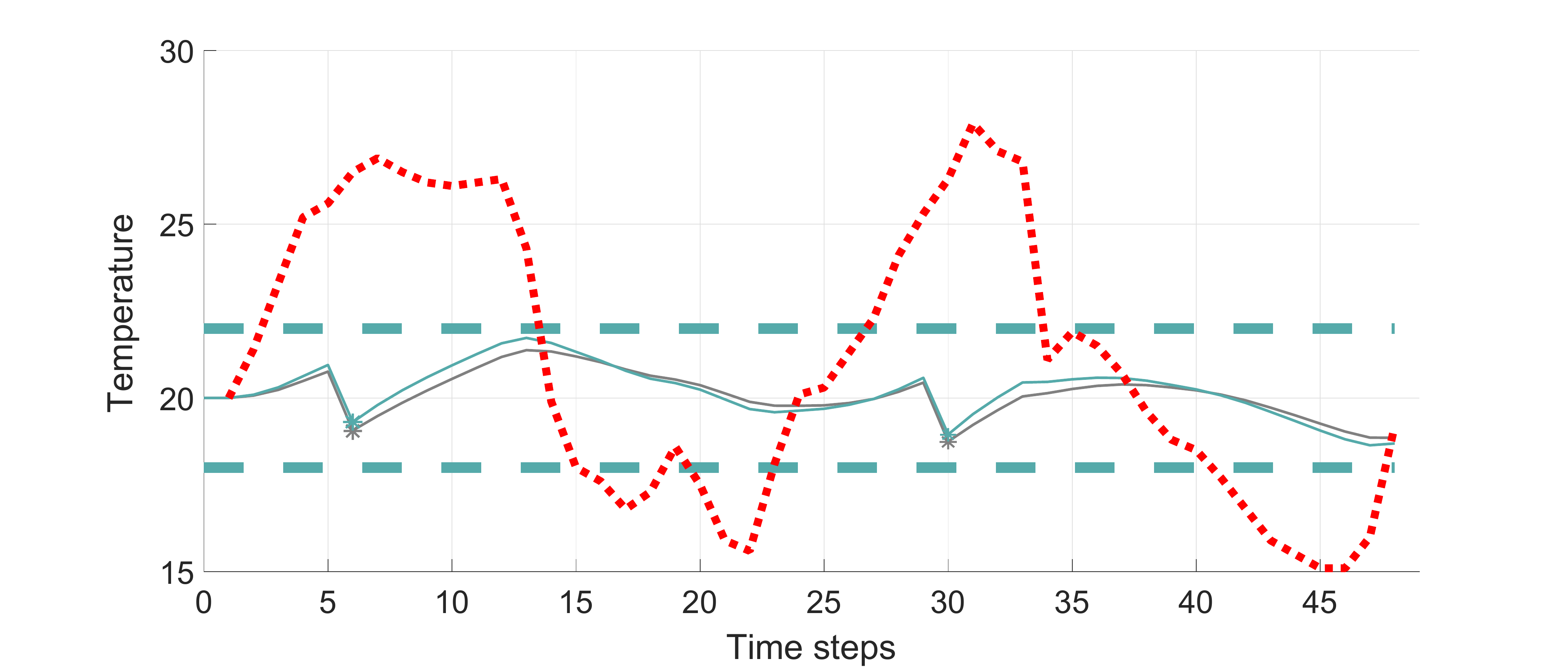}	
		\end{tabular}
	\end{center}
	\caption{Trajectories are defined as in Figure \ref{fig:params} for the three room scenario, except with the comfort parameter $\gamma=1$.}
\end{figure}

\subsection{Higher Order Convex Problems}
One of the advantages of the proposed algorithm is that it is applicable to a relatively large class of problems (namely, MICPs). While Section \ref{sec:miqp_sim} shows favourable results for both Gurobi and CPLEX, if the problem were adjusted slightly such that it were no longer an MIQP then these solvers would no longer be applicable. For example, if the objective function of Problem \eqref{eq:tcl} became
\begin{equation*}
\min\limits_{T(\cdot),u(\cdot)} \sum\limits_{t=0}^{H-1} c(t)u(t) + \gamma (T_i(t) - T_{ref}(t))^4,
\end{equation*}
\noindent then this would still be solvable via PaDOA, but not Gurobi or CPLEX. However, Bonmin can still be applied.\footnote{It should be noted that all results seen in this section for Algorithm~2 use Bonmin to solve the MICP subproblems.} The results for a variety of such problem configurations are shown below in Figure \ref{t:minlp}. Therein it can be observed that Algorithm~2 returns the same, global solution as Bonmin\footnote{With the Branch and Bound sub-algorithm.}, and does so in less time. The runtime difference is particularly striking for the 7 room scenarios as these contain the most variables and have the greatest potential for parallelization.

\begin{table}[ht]	
	\setlength{\tabcolsep}{4pt}
	\renewcommand{\arraystretch}{1.3}
	\begin{center}
		\caption{Results obtained for Problem \eqref{eq:tcl}, but with a $4^{th}$ order objective function.}
		\label{t:minlp}
		\begin{tabular}{|c|c|c|c|c|c|c|c|c|}
			\hline
			\multicolumn{2}{|c|}{}& \multicolumn{3}{c|}{Alg.~2}  & \multicolumn{2}{c|}{B-B\&B} & \multicolumn{2}{c|}{B-OA} \\
			\hline
			Rooms	&Time steps	& obj. & time(s)  & iter.  & obj. & time(s)  & obj. & time(s)  \\
			\hline
			3&  8& 16.72 & 4.20 & 5 & 16.72 & 7.51 & N/A & N/A \\
			3& 24& 76.86 & 19.14 & 4 & 76.86 & 308.64 & N/A & N/A  \\
			3& 48& 115.62 & 95.56 & 3 & 115.62 & 547.98 & N/A & N/A \\
			3& 62& 141.44 & 405.19 & 6 & 141.44 & 779.41 & N/A & N/A \\
			4&  8& 21.73 & 4.11 & 6 & 21.73 & 16.93 & N/A & N/A \\
			4& 24& 44.94 & 6.47 & 3 & 44.94 & 9.93 & N/A & N/A  \\
			4& 48& 86.72 & 151.58 & 5 & 86.72 & 477.60 & N/A & N/A \\
			4& 62& 120.71 & 175.78 & 6 & 120.71 & 781.46 & N/A & N/A \\
			7&  8& 38.41 & 8.87 & 7 & 38.41 & 374.38 & N/A & N/A \\
			7& 24& 122.09 & 34.86 & 3 & 122.09 & 1291.29 & N/A & N/A  \\
			7& 48& 202.54 & 1,631.6 & 4 & 202.54 & 2,830.93 & N/A & N/A \\
			7& 62& 263.91 & 3614.54 & 4 & 263.91 & 9453.89 & N/A & N/A \\
			\hline
		\end{tabular}
	\end{center}
\end{table}

\begin{table}[ht]	
	\setlength{\tabcolsep}{4pt}
	\renewcommand{\arraystretch}{1.3}
	\begin{center}
		\caption{Runtime breakdown of Algorithm~2 applied to the $2^{nd}$-order version of Problem \ref{eq:tcl} with 7 room TCL problem with 48 time steps. }
		\label{t:runtime_breakdown}
		\begin{tabular}{|c|c|c|c|c|c|}
			\hline
			& Iter. 1 & Iter. 2  & Iter. 3  & Iter. 4   \\
			\hline
			MINLP time (s)	    & 0.27 & 0.20 &	0.20 & 0.20  \\
			MILP time(s)	    & 2.20 & 86.89 & 82.62 & 87.62  \\
			Hyperplane time (s)	& 0.001 & 0.005 &	0.002 & 0.002  \\
			\hline
		\end{tabular}
	\end{center}
\end{table}

\begin{figure}[th!]\label{fig:conv}
	\begin{center}
		\begin{tabular}{ccc}
			\includegraphics[width = 0.9\textwidth]{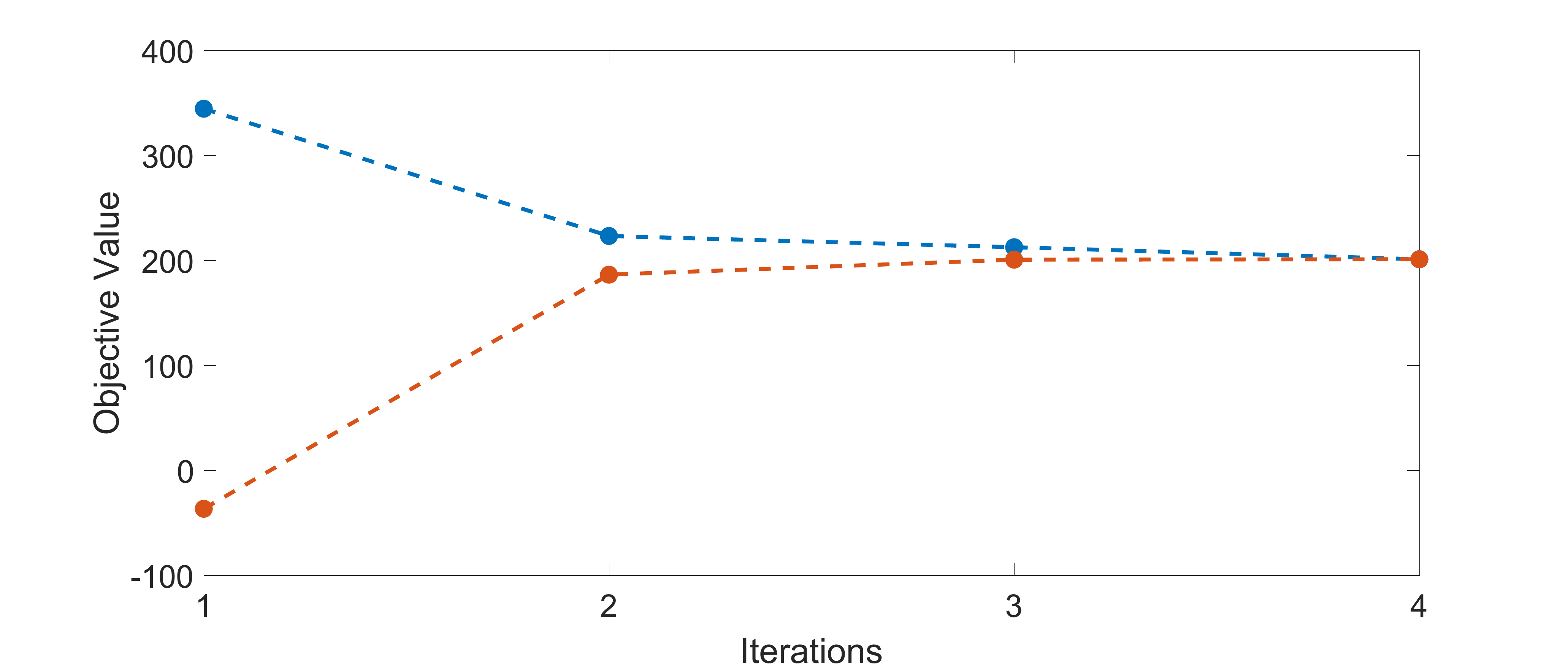}	
		\end{tabular}
	\end{center}
	\caption{Progression of the upper and lower bounds during each iteration while solving the $2^{nd}$-order version of Problem \ref{eq:tcl} with 7 rooms and 48 time steps. The blue line depicts the progression of the upper bound and the red shows that of the lower bound.}
\end{figure}

\subsection{Outlook}
The implementation of Algorithm~2 used in Section \ref{sec:sim} was protypical and could be improved in a number of ways. Interestingly, as Table \ref{t:runtime_breakdown} shows, the majority of the time spent by the algorithm was typically on the coupling problems. However, more time was required for the solution of the MICP subproblems in the higher order version of the problem. An example of which is shown in Table \ref{t:runtime_breakdown2}, along with the relevant convergence plot.

The reason why the linear approximations required so much time to be solved likely lies in the heuristics and presolving processes used by Gurobi. This is despite the fact that Gurobi was used to solve both the linear approximations and the full nonlinear program.

\begin{table}[ht]	
	\setlength{\tabcolsep}{4pt}
	\renewcommand{\arraystretch}{1.3}
	\begin{center}
		\caption{Runtime breakdown of Algorithm~2 applied to the $4^{th}$-order version of Problem \ref{eq:tcl} with 4 rooms and 8 time steps.}
		\label{t:runtime_breakdown2}
		\begin{tabular}{|c|c|c|c|c|c|c|c|}
			\hline
			& Iter. 1 & Iter. 2  & Iter. 3  & Iter. 4 & Iter. 5 & Iter. 6  \\
			\hline
			MINLP time (s)	    & 0.73 & 0.75 &	0.67 & 0.65 & 0.65 & 0.66\\
			MILP time(s)	    & 0.03 & 0.04 & 0.04 & 0.04 & 0.04 & 0.04 \\
			Hyperplane time (s)	& 0.004 & 0.006 &	0.002& 0.002 & 0.002&0.002 \\
			\hline
		\end{tabular}
	\end{center}
\end{table}

\begin{figure}[th!]\label{fig:conv2}
	\begin{center}
		\begin{tabular}{ccc}
			\includegraphics[width = 0.9\textwidth]{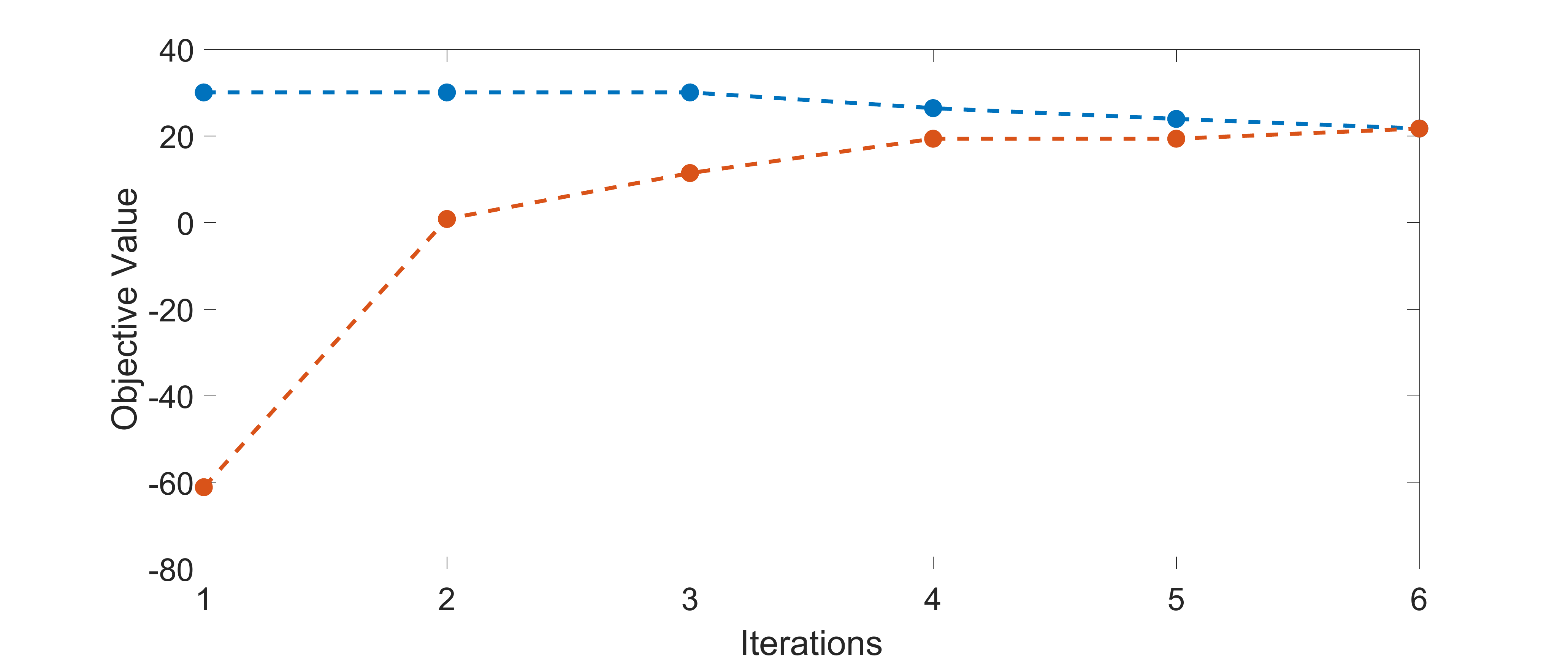}	
		\end{tabular}
	\end{center}
	\caption{Progression of the upper and lower bounds during each iteration while solving the $4^{th}$-order version of Problem \ref{eq:tcl} with 4 rooms and 8 time steps. The blue line depicts the progression of the upper bound and the red shows that of the lower bound.}
\end{figure}


\section{Conclusions}
\label{sec::conclusions}
This paper has introduced the partially distributed outer approximation method PaDOA (Algorithm~2) for finding $\epsilon$-suboptimal points of the structured MICP~\eqref{eq::minlp}. PaDOA proceeds by alternating between solving partially decoupled MICPs that comprise $n$ integer variables and large scale MILPs with $nN$ variables. Finite termination conditions for PaDOA have been established in Theorem~\ref{thm::PaDOA}. Moreover, we have discussed the major theoretical and practical advantages of PaDOA compared to exising extended formulation based OA solvers. In particular, Theorem~\ref{thm::termination} states that PaDOA terminates after the first iteration, if it is initialized at a global minimizer---an important property that is neither shared by existing OA nor by existing branch-and-bound based methods for MICP.

In Section \ref{sec:sim}, first, second and fourth order mixed integer problems were used to demonstrate the practical performance of PaDOA compared to other state of the art solvers by application to a scheduling problem of thermostatically controlled loads. While the solution and runtime are competitive for each of the case studies considered, the best performance was observed for problems with sparse Hessians and coupling constraints. Furthermore, it was observed that PaDOA was able to return a solution in several cases where the centralized approach could not due to memory constraints.


Future work will investigate the use of piecewise linear underapproximations and/or cutting planes in the MILP step to extend PaDOA to non-convex MINLPs. Furthermore, Step \ref{step::MILP} requires full constraint information in order to return a feasible solution. This restricts the applicability in terms of fully distributed settings and future work will focus on sidestepping this restriction.


\bibliographystyle{plain}
\bibliography{BAM_bib}   

\appendix

\section{Proofs}

\subsection{Proof of Proposition~\ref{prop::AuxOptDual}}
\label{app::AuxOptDual}

If there is no $x \in X$ with $Ax = b$, both sides of~\eqref{eq::AuxOptDual} are equal to infinity and the statement of the proposition holds in the extended value sense. Thus, we may assume that the constraints in~\eqref{eq::AuxOptDual} are feasible. Consequently, Assumption~\ref{ass::convexity} implies that~\eqref{eq::AuxOpt} is a convex optimization problem with compact and non-empty feasible set. Moreover, since $X$ is a polytope, all constraints in~\eqref{eq::AuxOpt} are linear. It is well-known~\cite{Bertsekas1999,Rockafellar1970} that strong duality holds under these conditions.
\QED

\subsection{Proof of Lemma~\ref{lemma::lowerBound}}
\label{app::lowerBound}
If there is no $x \in X$ with $A x = b$, both sides of~\eqref{eq::TightLowerBound} are equal to infinity and the statement of the lemma holds in the extended value sense. Thus, we may assume that the equation $Ax = b$ has a solution in $X$. Next, because we have $z \in \Xi$, our particular construction of $\Phi$ implies that
\begin{eqnarray}
\Phi(x,z,\Xi) &\geq& f^\star(z) + \left[ \lambda^\star(z) \right]^\tr ( x - x^\star(z) ) + \underbrace{\left[ \mu^\star(z) \right]^\tr ( z - z )}_{=0}
\end{eqnarray}
Thus, we have
\begin{eqnarray}
\min_{x \in X, Ax = b} \; \Phi(x,z,\Xi) &=& \min_{y \in X, Ay = b} \;  f^\star(z) + \left[ \lambda^\star(z) \right]^\tr ( y - x^\star(z) ) \notag \\[0.16cm]
&=& \min_{x,y} \; f(x,z) + \left[ \lambda^\star(z) \right]^\tr ( y - x )  \quad \mathrm{s.t.} \quad \left\{
\begin{array}{l}
A y = b \\
y \in X
\end{array}
\right. \notag \\[0.25cm]
&=& \max_{\lambda} \; \min_{x,y} \; f(x,z) + \lambda^\tr (y-x) \quad \mathrm{s.t.} \quad \left\{
\begin{array}{l}
A y = b \\
y \in X
\end{array}
\right. \; . \notag
\end{eqnarray}
Since Assumption~\ref{ass::convexity} holds, we may substitute~\eqref{eq::AuxOptDual} (see Proposition~\ref{prop::AuxOptDual}), which yields the equation
\[
\forall z \in \Xi, \qquad  \min_{x \in X, Ax = b} \; \Phi(x,z,\Xi) = f^\star(z) \; .
\]
\QED

\subsection{Proof of Theorem~\ref{thm::termination1}}
\label{app::termination1}

Notice that if the equation $Ax = b$ has no solution in $X$, this will be detected immediately by Step~2 of Algorithm~1, which causes termination. Thus, we may assume that all optimization problems are feasible. Now, the main idea of the proof is to show that the cardinality of the set $\Pi$ is strictly increasing in every iteration, if the algorithm does not terminate. For this aim, we first notice that any solution $(x^+,y^+,z^+)$ of the MILP~\eqref{eq::MILP1} satisfies the equation
\begin{align}
\label{eq::AUX1}
\Phi(x^+,z^+,\Pi) = \sum_{i=1}^N y_i^+
\end{align}
by construction. Moreover, because we have $A x^+ = b$, the inequality
\begin{align}
\label{eq::AUX11}
\min_{x,Ax = b} \; \Phi(x,z^+,\Pi) \leq \Phi(x^+,z^+,\Pi)
\end{align}
holds. If we further assume that the termination criterion is not satisfied, we must have
\begin{align}
\label{eq::AUX2}
\sum_{i=1}^N y_i^+ < U - \epsilon
\end{align}
Thus, if we had $z^+ \in \Xi$, then the result of Lemma~\ref{lemma::lowerBound} would imply that
\begin{align}
\label{eq::AUX22}
f^\star(z^+) \overset{\eqref{eq::TightLowerBound}}{=} \min_{x,Ax = b} \; \Phi(x,z^+,\Pi)
\end{align}
as well as $U \leq f(z^+)$, since $z^+$ has already been added to the collection $\Pi$. By substituting all the above relations we would then find that
\[
f^\star(z^+) \, \overset{\eqref{eq::AUX22},\eqref{eq::AUX11}}{\leq} \, \Phi(x^+,z^+,\Pi) \, \overset{\eqref{eq::AUX1},\eqref{eq::AUX2}}{<} \, U - \epsilon \, \leq \, f(z^+) - \epsilon \; ,
\]
which is a contraction. Thus, either our assumption that the algorithm does not terminate or our assumption $z^+ \in \Xi$ must be wrong. In other words, if the algorithm does not terminate in the current step, then the cardinality of the set $\Pi$ increases by $1$ in the next step, because $z^+$ is added to the collection $\Pi \subseteq Z$. But this is only possible for a finite number of steps, because the set $Z$ contains only a finite number of points. Thus, Algorithm~1 must terminate after a finite number of iterations.
\QED

\subsection{Proof of Theorem~\ref{thm::termination}}
\label{app::termination}

Let $V^\star = \sum_{i=1}^N f_i(x_i^\star,z_i^\star)$ denote the optimal value of~\eqref{eq::minlp}. Because we assume that such an optimal solution exists while Assumption~\ref{ass::convexity} is satisfied, the partially decoupled optimization problems are all feasible and return piecewise affine lower bounds that satisfy the termination condition~\eqref{eq::LowerLevelTermination} with $V_k(z^\star) = V^\star$, i.e., we have
\begin{eqnarray}
V^\star - \epsL \; \leq \min_{x \in X,\zeta \in Z_k} \Theta_k^\star(x,\zeta) \quad \mathrm{s.t.} \quad A x = b 
\end{eqnarray}
for all $k \in \{ 1, \ldots, N \}$. Because the function $\Theta$ is by construction an upper bound on $\Theta_k$ (for any $k$), we further have
\[
\min_{x \in X,\zeta \in Z_k,Ax = b} \Theta_k^\star(x,\zeta) \leq  \min_{x \in X,z \in Z,Ax = b} \Theta(x,z) = \Theta(x^+,z^+) \; ,
\]
where $(x^+,z^+)$ denotes the solution of the master MILP~\eqref{eq::MILP}. By substituting the above inequalities we find that
\[
V^\star - \epsL \leq \Theta(x^+,z^+) \; .
\]
Because we assume that $\epsL \leq \epsilon$, this implies that
\[
U - \Theta(x^+,z^+) = V^\star - \Theta(x^+,z^+) \leq \epsL \leq \epsilon \; .
\]
Thus, the termination condition is satisfied and Algorithm~2 terminates after the first step.
\QED

\subsection{Proof of Theorem~\ref{thm::PaDOA}}
\label{app::PaDOA}

We may assume that the coupled equality constraint is feasible, as infeasibility would be detected immediately in Step~1 of Algorithm~2. Similar to the proof of Theorem~\ref{thm::termination1}, we need to keep track of the integer solutions of the master MILPs. For this aim, we introduce the following ``artificial'' additional step:
\begin{quote}
Step $3'$): After solving~\eqref{eq::MILP}, update $\tilde \Pi = \tilde \Pi \cup \{ z^+ \}$.
\end{quote}
If the set $\tilde \Pi$ is initialized with the empty set and if Step $3'$ is inserted in Algorithm~2 immediately after Step~3, the iterates of this algorithm remain unaffected. The main idea of the proof is now to show that the cardinality of the set $\tilde \Pi$ increases in every iteration of Algorithm~1 under the assumption that the termination criterion is not satisfied. Let us assume that the solution $z^+$ satisfies $z^+ \in \tilde \Pi$ (before $\tilde \Pi$ is updated in Step $3'$). Then we have
\[
U \leq V_k(z^+) \overset{\eqref{eq::LowerLevelTermination}}{\leq} \epsL + \min_{x \in X,\zeta \in Z_k, Ax = b} \Theta_k^\star(x,\zeta) \; ,
\]
Because $\Theta$ is an upper bound on $\Theta_k$, this implies that we also have
\[
\min_{x \in X,\zeta \in Z_k, Ax = b} \Theta_k^\star(x,\zeta) \leq \min_{x \in X,z \in Z, Ax = b} \Theta^\star(x,\zeta) = \Phi(x^+,z^+) \; ,
\]
which yields $U - \Phi(x^+,z^+) \leq \epsL \leq \epsilon$. Thus, either the termination criterion is satisfied  or we have $z^+ \notin \tilde \Pi$. In the latter case, the cardinality of $\Pi$ increases by $1$ in the current iteration. As this is only possible for a finite number of steps, Algorithm~2 must terminate.
\QED

\end{document}